\newtheorem{question}{Question}
\newtheorem{thm}{Theorem}[section]
\newtheorem{definition}[thm]{Definition}
\newtheorem{remark}[thm]{Remark}
\newtheorem{example}[thm]{Example}
\newtheorem{corollary}[thm]{Corollary}
\newcommand{\ignore}[1]{}
\newcommand{\FrakA}{\mathfrak{A}}
\newcommand{\FrakB}{\mathfrak{B}}
\newcommand{\A}{\mathcal{A}}
\newcommand{\B}{\mathcal{B}}
\newcommand{\C}{\mathcal{C}}
\newcommand{\CalL}{\mathcal{L}}
\newcommand{\M}{\mathcal{M}}
\renewcommand{\S}{\mathcal{S}}
\newcommand{\structs}{\mathbb{E}}
\newcommand{\natnum}{\omega}
\newcommand{\pair}[1]{\langle #1 \rangle}
\newcommand{\Text}{\mathbf{Txt}}
\newcommand{\Inf}{\mathbf{Inf}}
\newcommand{\Ex}{\mathbf{Ex}}
\newcommand{\Bc}{\mathbf{Bc}}
\newcommand{\Fin}{\mathbf{Fin}}
\newcommand{\isom}{\cong}
\newcommand{\embeds}{\hookrightarrow}
\newcommand{\embedsFin}{\hookrightarrow_{\mathrm{fin}}}
\newcommand{\biEmbeds}{\approx}
\newcommand{\biEmbedsFin}{\approx_{\mathrm{fin}}}
\newcommand{\set}[1]{\{{#1}\}}
\DeclareMathOperator{\dom}{dom}
\DeclareMathOperator{\range}{range}
\DeclareMathOperator{\chara}{char}
\DeclareMathOperator{\SEP}{SEP}
\DeclareMathOperator{\sep}{sep}
\DeclareMathOperator{\content}{content}
\title[Limit Learning Equivalence Structures]{Limit Learning Equivalence Structures}
\author[E.~Fokina]{Ekaterina Fokina}
\address{Institute of Discrete Mathematics and Geometry, Vienna University
of Technology, Vienna, Austria}
\email{ekaterina.fokina@tuwien.ac.at}
\author[T.~K{\"o}tzing]{Timo K{\"o}tzing}
\address{Hasso Plattner Institute,
University of Potsdam, Germany
}
\email{timo.koetzing@hpi.de}
\author[L.~San Mauro]{Luca San Mauro}
\address{Institute of Discrete Mathematics and Geometry, Vienna University
of Technology, Vienna, Austria}
\email{luca.san.mauro@tuwien.ac.at}
\thanks{The authors are grateful to the referees for their thorough comments. Fokina was supported by the Austrian Science Fund, project~P~27527.  San Mauro was supported by the Austrian Science Fund FWF, project~M~2461. K{\"o}tzing was supported by the German Research Foundation (DFG) under grant number KO 4635/1-1.}
\keywords{Algorithmic learning theory, computable structures, equivalence structures}
\begin{document}

\maketitle

\begin{abstract}
While most research in Gold-style learning focuses on learning formal languages, we consider the identification of computable structures, specifically equivalence structures. In our core model the learner gets more and more information about which pairs of elements of a structure are related and which are not. The aim of the learner is to find (an effective description of) the isomorphism type of the structure presented in the limit. In accordance with language learning we call this learning criterion $\Inf\Ex$-learning (explanatory learning from informant).

Our main contribution is a complete characterization of which families of equivalence structures are  $\Inf\Ex$-learnable. This characterization allows us to derive a bound of $\mathbf{0''}$ on the computational complexity required to learn uniformly enumerable families  of equivalence structures. We also investigate variants of $\Inf\Ex$-learning, including learning from text (where the only information provided is which elements are related, and not which elements are not related) and finite learning (where the first actual conjecture of the learner has to be correct). 
Finally, we show how learning families of structures relates to learning classes of languages by mapping learning tasks for structures to equivalent learning tasks for languages.
\end{abstract}

\section{Introduction}

Consider a learner observing (a countably infinite number of) different items to be equivalent or not equivalent. The learner would like to arrive at a conjecture about the \emph{structure} of this equivalence relation, that is, the learner would like to determine the \emph{isomorphism type} of the equivalence structure embodied by the items. For example, the learner could see more and more groups of $5$ equivalent objects, and no groups of other sizes, and announces as a conjecture ``infinitely many equivalence classes of size $5$ and no equivalence classes of other sizes''. If the first guess has to be correct, we call the setting \emph{finite learning} (denoted $\Fin$), if the conjecture may be changed an arbitrary (but finite) number of times before stabilizing on a correct conjecture, we call the setting \emph{explanatory learning} (denoted $\Ex$). In each case, the data available to the learner is a \emph{complete} \emph{accurate} list of which elements of the structure are equivalent and which are not. Following standard convention in inductive inference, we call this \emph{learning from informant} ($\Inf$), where both positive and negative information is available.

In general, this style of learning is called \emph{learning in the limit} or \emph{inductive inference}, and dates back to Gold~\cite{Gol:j:67}. Most work in inductive inference concerns either learning of formal languages or learning of total functions (see the text books \cite{Osh-Sto-Wei:b:86:stl,Jai-Osh-Roy-Sha:b:99:stl2}), the case of learning other structures has first been considered by Glymour~\cite{Gly:j:85} and is surveyed by Martin and Osherson~\cite{Mar-Osh:b:98}. More recently, researchers investigated the case in which the languages to be learned correspond to substructures of a given structure. For instance, Stephan and Ventsov~\cite{Ste-Ven:j:01:algebraic}, Harizanov and Stephan~\cite{HarizanovS07}, and Merkle and Stephan~\cite{stephan2004} considered learnable ideals of rings, subgroups
and submonoids of groups, subspaces of vector spaces and isolated branches on uniformly computable
sequences of trees. They showed that different types of learnability of various families of
computable or computably enumerable structures can be characterized algebraically.

With the present paper, we want to strengthen the connection between algorithmic learning theory and computable structure theory by developing a learning framework in which one can formalize the intuition of learning an arbitrary structure in the limit (the interested reader can consult \cite{Ash-Knight:Book} for a classical introduction to computable structure theory). To this end, equivalence structures represent an ideal case-study. Although being fairly basic from an algebraic point of view, equivalence structures exhibit many deep effective properties, and thus they attracted much attention from computable theorists. We offer just few examples: Calvert, Cenzer, Harizanov, and Morozov~\cite{calvert2006effective} classified computable equivalence structures that possess a unique computable presentation up to computable isomorphism;
Downey, Melnikov, and Ng~\cite{downey2017friedberg} studied the complexity of listing computable equivalence structures with no repetitions; and recently there has been
an increasing interest in analyzing the effective content of computably enumerable equivalent relations, as in \cite{Andrews:14}.

\smallskip

In what follows,  we denote by $\omega$ the set of natural numbers and we use a convenient short hand to denote isomorphism types.  For any function $f: \natnum \cup \{\omega\} \rightarrow \natnum \cup \{\omega\}$ we denote by $[f]$  the isomorphism type of exactly $f(a)$ many equivalence classes of size $a$, and if only finitely many values of $f$ are non-zero, then we can list all these values as $[n_0:f(n_0),\ldots, n_k:f(n_k)]$. For example, $[5:\omega]$ denotes the isomorphism type of infinitely many equivalence classes of size $5$ and no others. Note that, to learn a structure $\A$, we must learn any presentation of $\A$ with members of the natural numbers.

Trivially, a single structure, or also a single isomorphism type, is always learnable by a learner which constantly outputs a correct conjecture. Thus, we are interested in which \emph{families} of structures can be learned by a single learner. Thus, we consider families $\FrakA$ of equivalence relations on $\natnum$ and ask whether there is a single learner $M$ such that $M$ can learn any $\A \in \FrakA$ when given more and more (accurate and, in the limit, complete) information about $\A$. We will mostly consider arbitrary functions as learners, but  we will also discuss the computational complexity of learning. The following example illustrates the concept of learning families of structures.

\begin{example}
$[5:\omega, 6:2]$ and $[5:\omega,7:1]$ are simultaneously $\Inf\Fin$-learnable (finitely learnable from informant): If there is ever an equivalence class of size $7$ in the input, then the second structure is the only possibility, while once there are two classes of size $6$ are in the input, the first structure is the only possibility.
\end{example}

It is most interesting to see what parts of the learning setting influence the learning power, and in what way. For example, we might wonder whether the ability to change the hypothesis arbitrarily often (as in explanatory learning) gives an advantage over finite learning. The next example shows that the learning power of finite learning is indeed smaller than that of explanatory learning.
\begin{example}
$[5:\omega]$, $[6:\omega]$ are not simultaneously $\Inf\Fin$-learnable, but $\Inf\Ex$-learnable: Regarding the negative part, any finite information about an instance of $[5:\omega]$ can be extended to an instance of $[6:\omega]$, so at no time can the learner commit to a hypothesis. An explanatory learner on the other hand can conjecture $[5:\omega]$ until any equivalence class of size $6$ appears in the input and then change to $[6:\omega]$.
\end{example}

This shows that $\Inf\Fin$-learning is unreasonably weak, only very restricted families of structures are learnable in this sense (we characterize finite learning in two ways in Theorem~\ref{thm:FinLearningOfFiniteClasses}). Note that a common way of restricting the learner even less than in $\Inf\Ex$-learning is by not requiring \emph{syntactic} convergence to a final hypothesis, but only \emph{semantic} convergence; that is, from some point on, all conjectures are correct, just not necessarily the same. The corresponding learning criteria replace $\Ex$ by $\Bc$ (behaviorally correct). We are mostly interested in learnability by arbitrary learners which can check for equivalence of conjectures (which is typically undecidable), so for our setting this relaxation does not make a difference. Thus, in this paper, we are interested in understanding the $\Inf\Ex$ learning criterion.

 So far we have seen examples of families that are $\Inf\Ex$-learnable. In order to establish that some families are not learnable, we turn to the concept of \emph{locking sequences}, which is used extensively to show nonlearnability in the setting of learning formal languages. A locking sequence is a sequence of inputs $\sigma$ for the target concept such that the learner does not change its mind regardless of how $\sigma$ is extended with information for the target concept. Since we only want to learn up to isomorphism and the original concept of locking sequences is adjusted to exact learning, we get a slightly different notion of locking sequence (see the appendix for details). With this we can get the following result: there are two structures which are bi-embeddable (i.e., there is an embedding from any of the two structures into the other), but not simultaneously learnable. Clearly, if we only required learning up to bi-embeddability, then the two structures would be simultaneously learnable (by a constant learner). We use $\Inf\Ex_{\isom}$ to denote learning up to ismorphism and $\Inf\Ex_{\biEmbeds}$ for learning up to bi-embeddability\footnote{We use $\isom$ to denote isomorphism of structures and $\biEmbeds$ for bi-embeddability of structures.}.  This result is summarized in the following example.
\begin{example}\label{ex:biEmbedsNotIsom}
$[5:\omega, 2:1]$, $[5:\omega]$ are not simultaneously $\Inf\Ex_{\isom}$-learnable, but $\Inf\Ex_{\biEmbeds}$-learnable: Since the structures are bi-embeddable, a constant learner can $\Inf\Ex_{\biEmbeds}$-learn both. Proving that these two structures are not $\Inf\Ex_{\isom}$-learnable is a bit harder: intuitively, the problem is that any finite fragment of the two structures can be extended into a finite fragment of the other, in such way that a potential learner would be forced to change its mind infinitely often (see Theorem~\ref{thm:characterization} and Theorem~\ref{thm:computablyBiembedButNotIsomLearnableShort}).
\end{example}
Interestingly, Example~\ref{ex:biEmbedsNotIsom} shows that there is a class containing only finitely many different learning targets (in this case two), but it is still not $\Inf\Ex_{\isom}$-learnable; this is in contrast to language learning, where learning finitely many different learning targets can always be distinguished.

Learning up to bi-embeddability is also of independent interest. In recent years, the relation of bi-embeddability received much attention in descriptive set theory and computable structure theory. In particular, bi-embeddability is of fundamental interest for classifying the complexity of equivalence relations in terms of Borel reducibility, as in \cite{friedman2011analytic}, and the effective countepart of Borel reducibility discussed, e.g., in \cite{fokina2012isomorphism}. See also \cite{fokina_comp-bi-embeddability_2017} for a full classification of computable presentations of equivalence structures up to bi-embeddability.

In our examples so far we have never exploited the information that two elements are not related (the negative information). Learning without negative information is called \emph{learning from text} (as opposed to learning from informant) and is denoted by $\Text$ instead of $\Inf$. In fact we can show that, for learning structures neither of which has an equivalence class of size $\omega$, informant and text learning are equivalent (see Theorem~\ref{thm:textEquivalentToInformant}). The following example shows that this does not extend to structures which contain equivalence classes of size $\omega$.
\begin{example}
$[\omega:1]$, $[\omega:2]$ is $\Inf\Ex_{\isom}$-learnable, but not $\Text\Ex_{\isom}$-learnable: Regarding the positive part, conjecture $[\omega:1]$ until two elements are known to not be in the same equivalence class, then conjecture $[\omega:2]$. The negative part is based on the concept of locking sequences, see Theorem~\ref{thm:textNoteEquivalentToInformant}.
\end{example}

These examples already give a good impression of what is learnable and what is not learnable with which kind of strategies. To further extend our intuition on what is $\Inf\Ex_{\isom}$-learnable and what is not, we consider the following example.
\begin{example}\label{ex:notLearningLimitInstances}
The infinite class of structures $\set{[5:n, 1:\omega] \mid n \in \natnum} \cup \set{[5:\omega]}$ is \emph{not} $\Inf\Ex_{\isom}$-learnable: Intuitively, when a learner tries to learn $[5:\omega]$ and stops making mind changes after having seen some finite number $n$ of equivalence classes of size $k$, the learner cannot successfully learn $[5:n, 1:\omega]$, since any extension to a finite part of $[5:n, 1:\omega]$ can be extended to $[5:\omega]$.
\end{example}

We use these intuitions in Section~\ref{sec:characterizing} to give a  characterization of $\Inf\Ex_{\isom}$-learning which we call \emph{finite separability}. 
Intuitively, a class of structures $\FrakA$ is finitely separable if nonisomorphic structures of $\FrakA$ that are finitely bi-embeddable can  be distinguished by looking at  some finite fragment of them. 
This characterization now completely informs about which families of structures are learnable. This simplifies proofs and furthermore allows us to give a bound on the complexity of learning c.e.\ enumerations of structures (see Section~\ref{sec:complexityOfLearning}). Note that the notion of finite separability is similar to the existence of \emph{tell-tales} as used for characterizing learnable classes of languages(see \cite{Ang:j:80:lang-pos-data}).

With this characterization we were able to approach the complexity of $\Inf\Ex_{\isom}$-lear\-ning and show that, for uniformly enumerable sets of $\Inf\Ex_{\isom}$-learnable structures, $\mathbf{0''}$-computable learners are sufficient for learning, while com\-pu\-ta\-ble learners are not. 

For the reader familiar with language learning we provide two embeddings of learning of equivalence structures into the setting of language learning in Section~\ref{sec:languageLearning}. For both $\Inf\Ex_{\isom}$ and $\Inf\Ex_{\biEmbeds}$ we can map families of structures in an intuitive way to classes of languages such that a class of structures can be learned iff its image under the corresponding map is $\Text\Ex$-language-learnable, provided that the structure of learning equivalence classes is in some sense a substructure of learning languages from text.

Note that, compared with language learning, learning of structures provides interesting new settings in which targets do not have to be learned exactly, but only up to some equivalence relation on structures (such as isomorphism). Furthermore, learning up to isomorphism has the advantage that ``coding tricks'' from language learning (making classes of languages learnable by having each language ``encode'' a correct hypothesis artificially in the data) are somewhat avoided. See \cite[§13]{Jai-Osh-Roy-Sha:b:99:stl2} for a discussion on coding tricks.

\section{Learning of Structures}

Our object of study is the class $\structs$ of the equivalence structures $\A$ on natural numbers. For the benefit of exposition, we assume that all our equivalence structures are of the form $(\omega,E)$, where $E$ is an equivalence relation on $\omega$. We say that $\A$ is an \emph{$\omega$-presentation} of $\M$ if $\A \isom \M$ and $\A$ has domain $\omega$. Note that the choice of limiting our focus to $\omega$-presentations of equivalence structures is not a strong restriction: given any infinite equivalence structure $\M = (M,E)$, with $M = \set{m_0,m_1,\ldots}$, one can use the bijection $i \mapsto m_i$ to get an equivalence structure $\A$ that is an $\omega$-presentation of $\M$.

The \emph{atomic diagram} of $\A$ is the set of atomic formulas and negations of atomic formulas true of $\A$.

An equivalence structure $\A$ is \emph{computably presentable} if $\A$ is isomorphic to a computable equivalence structure. 
To formally define our learning framework, we rely on some effective enumeration of the computable structures from $\structs$, up to isomorphism. This can be done in many ways; for instance, Downey, Melnikov, and Ng~\cite{downey2017friedberg} showed, with a rather involved proof, that such an enumeration can be constructed which is a \emph{Friedberg} enumeration, i.e., with no repetitions of the isomorphism types. For our interests, it is enough to fix some computable  sequence of equivalence relations
$(E_i )_{i\in\omega}$, where  $E_i$ is an equivalence relation on $\omega$, such that any infinite computable equivalence structure is isomorphic to $\M_i=(\omega,E_i)$, for some $i$ (see \cite{downey2017friedberg} for more details). We say that $e$ is a \emph{conjecture} for $\M_e$.

Recall that we aim at modeling a learner that receives larger and larger finite pieces of information about some equivalence structure $\A$.

A \emph{text} is a function $T: \natnum \rightarrow \natnum^2 \cup \set{\#}$, where $\#$ is a special symbol denoting a \emph{pause}, that is, no new information. We let $\content(T) = \range (T) \setminus \set{\#}$ be the \emph{content} of $T$. For any text $T$ and equivalence structure $\A = (\omega,E)$, we say \emph{$T$ is a text for $\A$} iff $\content(T) = E$ (that is, $\content(T)$ is all and only the positive information about $\A$). Note that pause symbol is required so that the  structure $[1:\omega]$, where no element is related to any other, has a text. By $\Text(\A)$ we denote the set of all texts for $\A$.

An \emph{informant} is a function $I: \natnum \rightarrow \natnum^2 \times \set{0,1}$ such that, for any $(x,y) \in \natnum^2$, either $((x,y),0) \in \range(I)$ or $((x,y),1) \in \range(I)$ (but never both). We let $\content^+(T) = \set{(x,y) \mid ((x,y),1) \in \range(I)}$ be the \emph{positive content} of $I$. Intuitively, an informant eventually lists, for each pair of elements, whether they are related or whether they are unrelated. For any informant $I$ and equivalence structure $\A = (\omega,E)$, we say \emph{$I$ is an informant for $\A$} iff $\content^+(T) = E$. By $\Inf(A)$ we denote the set of all informants for $\A$.

For any function $f$ defined on natural numbers (such as texts and informants) and $n \in \natnum$ we let $f[n]$ denote the finite sequence $f(0),\ldots,f(n-1)$.

A learner is a function mapping initial segments of texts or informants to conjectures (elements of $\natnum \cup \set{?}$). The \emph{learning sequence} of a learner $M$ on a text or informant $f$ is $p:\natnum \rightarrow \natnum \cup \set{?}$  such that $p(n)=M(f[n])$.

For any finite sequence $\sigma$ which is an initial part of an informant, we let $\A_\sigma$ be the finite structure encoded by $\sigma$ by using as universe all elements mentioned either positively or negatively in $\sigma$, taking the transitive closure of all positively mentioned pairs and assuming all other relations to be negative. We denote by $\A[s]$ the finite substructure of $\A$ with domain $\set{0,\ldots,s-1}$.

Any predicate on learning sequences and $\structs$ is called a \emph{learning restriction}. We use the following learning restrictions.
\begin{definition}\label{def:exlearn}
Let $\sim$ be any equivalence relation on structures.\footnote{The equivalence relation $\sim$ intuitively defines that $[\A]_{\sim}$ is the target at which a learner $M$ is supposed to aim (typically $\sim$ is $\isom$ or $\biEmbeds$).}
We define the learning restriction $\Ex_\sim$ on a learning sequence $p$ and $\A \in \structs$ such that
$$
\Ex_\sim(p,\A) \Leftrightarrow \exists e \forall^\infty n: p(n) = e \wedge \A \sim \M_e.
$$
Further, we define the learning restriction corresponding to finite learning by 
$$
\Fin_\sim(p,\A) \Leftrightarrow \exists e: \set{e} \subseteq \range(p) \subseteq \set{e,?} \wedge \A \sim \M_e.
$$
\end{definition}

\begin{definition}
A \emph{learning criterion} is a triple of a set  $\C$  of (partial) functions $\natnum \rightarrow \natnum$ (the set of admissible learners), an operator $\alpha$ returning, for a given structure, a set of presentations for that structure (either all texts or all informants) and a learning restriction $\delta$. We also write $\C$-$\alpha\delta$ for the learning criterion $(\C,\alpha,\delta)$. The class of \emph{$\C$-$\alpha\delta$-learnable} structures contains all those sets $\FrakA$ of structures such that there is a learner $M \in \C$ such that, for all $\A \in \FrakA$, all $\omega$-presentations $\A^*$ of $\A$, and all $f \in \alpha(\A^*)$, $\delta(n \mapsto M(f[n]), \A^*)$. 
\end{definition}

Note, in the above definition, that to learn a structure $\A$, a learner should learn all the $\omega$-presentations of $\A$. Sometimes we will denote by $M(\A)$ the limit conjecture (if exists) of the learner $M$ on input $\A$ and by $M(\mathcal A[s])$ the conjecture $M$ produces when given a string encoding $\mathcal A[s]$.

\subsection{Notation Regarding Equivalence Structures}\label{notation}
The \emph{character} $char(\A)$ of $\A$ is
\[ 
\chara({\A})=\set{\langle k,i\rangle: \A \mbox{ has at least $i$ equivalence classes of size $k$, for $k\in\omega\cup\set{\omega}$}}.
\]
We call any element of $char({\A})$ a \emph{component} of $\A$. Sometimes, we will  approximate the character of $\A$ as follows
\[
\chara({\A}[s])=\set{\langle k,i\rangle: \A[s] \mbox{ has at least $i$ equivalence classes of size $k$, for $k\in\omega$}}.
\]

\smallskip

Let $\A=(\omega,E_{\A})$ and $\B=(\omega,E_{\B})$ be in $\structs$.  $\A$ \emph{embeds} into $\B$ (notation: $\A \embeds\B$)  if there is a injection $f: \omega \rightarrow \omega$ such that, for all $i,j \in \omega$, $i{E_\A}j \Leftrightarrow f(i){E_{\B}}f(j)$.  $\A$ \emph{finitely embeds} into $\B$ (notation: $\A \embedsFin \B$) if $\A[s]\embeds \B$, for all $s\in\omega$. $\A$ and $\B$ are \emph{bi-embeddable} (notation: $A \approx B$) if they embeds in each other. $\A$ and $\B$ are \emph{finitely bi-embeddable} (notation: $A \biEmbedsFin B$) if they finitely embeds in each other. $\A$ and $\B$ are \emph{isomorphic} (notation: $A \cong B$) if $\A \embeds \B$ via a bijection $f: \omega \rightarrow \omega$.

\section{Characterizing $\Inf\Ex_{\isom}$}
\label{sec:characterizing}

Our main focus is on the class $\Inf\Ex_{\isom}$. To help the reader get acquainted with our framework, we stress that some $\FrakA\in \Inf\Ex_{\isom}$ if there is a learner $M$ (of arbitrary complexity) such that, for any $\omega$-presentation $\A^*$ of a structure $\A\in\FrakA$, $M(\A^*)\cong \A$.  In this section we characterize which families of equivalence structures are $\Inf\Ex_{\isom}$-learnable. For the ease of presentation, we focus on equivalence structures with no infinite classes. Yet, it is not hard to modify the forthcoming analysis in order to obtain a full characterization of $\Inf\Ex_{\isom}$; this will be done in future work.

\begin{definition}\label{def:finiteseparability}
Let $\FrakA$ be a family of equivalence structures with no infinite classes, and $\mathcal{S}$ an equivalence structure. 

\begin{enumerate}
\item If $\FrakA$ has finitely many isomorphism types, then $\mathcal{S}$ is a \emph{limit} of $\FrakA$ if there is $\A \in \FrakA$ such that 
\[
\A\not\cong \S \wedge \A \embedsFin \S,
\]
\[
\mbox{and }\chara(\S) \subseteq \chara(\A).
\]

\item If $\FrakA$ has infinitely many isomorphism types, $\S$ is a \emph{limit} of $\FrakA$ if

\[
(\forall \A \in \FrakA)(\A \not\cong\S \wedge \A \embedsFin \S)
\]
\[
\mbox{and }\chara({\S})\subseteq \set{\langle k,i\rangle : \FrakA \mbox{ contains infinitely many $\B$'s  with } \langle k,i\rangle \in \chara(\B)}.
\]
\end{enumerate}

 $\FrakA$ is \emph{finitely separable} if, for all $\FrakB\subseteq \FrakA$, $\FrakB$ has no limits in $\FrakA$.
\end{definition}

To clarify the above notion of limit, together with the corresponding one of finite separability, let us compare it with known examples of failure of $\Inf\Ex_{\cong}$-learning.

Condition $1.$ is designed to deal with cases such as Example \ref{ex:biEmbedsNotIsom}: It says that $\S$ cannot be finitely separated from $\A$ if $\A$ finitely embeds in $\S$ and any component of $\S$ is a component of $\A$. Intuitively, this makes impossible to $\Inf\Ex_{\cong}$-learn $\set{\A,\S}$ because we can build an $\omega$-presentation $\S^*$ of $\S$ such that $\S^*$ has arbitrarily large fragments  that resemble $\A$, and this forces any potential learner $M$ to have infinitely many mind changes if attempting to learn $\S^*$.

The infinite case is handled by Condition $2.$ and turns out to be more delicate. But the idea is the same and it aims at formalizing cases such as Example \ref{ex:notLearningLimitInstances}: If $\S$ is the limit of an infinite family of pairwise nonisomorphic structures $\FrakA$, then we can construct an $\omega$-presentation of $\S$ that, for infinitely many initial segments, looks like some structure of $\FrakA$. In doing so, we eventually obtain a structure isomorphic to $\S$ because each component of $\S$ is witnessed by  infinitely many structures of $\FrakA$. 

\smallskip

We make the latter observations more precise by proving that finite separability coincides with $\Inf\Ex_{\cong}$-learnability.

\begin{remark}
In the following proof, we introduce the formal notion of separators (of a given structure) that justify the terminology ``finite separability'' and the underlying intuition that, if $\FrakA$ is finitely separable, then structures from $\FrakA$ can be distinguished by looking at only finite fragments of them.
\end{remark}

\begin{thm}\label{thm:characterization}
Let $\FrakA$ be a family of equivalence structures with no infinite classes. We have that
\[
 \FrakA \mbox{ is finitely separable} \Leftrightarrow \FrakA \in \Inf\Ex_{\cong}.
\]
\end{thm}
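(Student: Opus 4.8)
The plan is to prove the two implications separately: for $\FrakA\in\Inf\Ex_{\isom}\Rightarrow\FrakA$ finitely separable I argue by contraposition using locking sequences (as set up in the appendix), and for the converse I exhibit an explicit separator-based learner. For the nonlearnability direction, assume some $\FrakB\subseteq\FrakA$ has a limit $\S\in\FrakA$ and that $M$ learns $\FrakA$; I manufacture a single informant $f$ whose positive content is an $\omega$-presentation of $\S$ on which $M$ fails to converge. The engine is the locking-sequence lemma: since $M$ learns every $\A\in\FrakB$, for each $\A$-consistent segment $\rho$ there is an $\A$-consistent extension on which $M$ outputs a conjecture $\isom\A$ and never abandons it under further $\A$-consistent extensions. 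Two auxiliary facts make segments interchangeable: (i) a Hall-type criterion, namely for finite structures $F,G$ with no infinite classes, $F\embeds G$ iff for every threshold $t$ the number of classes of $F$ of size $\ge t$ is at most that of $G$; and (ii) consequently $\chara(X)\subseteq\chara(Y)$ implies that every finite $F\embeds X$ also satisfies $F\embeds Y$.

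In the finite case there is a single witness $\A\in\FrakB$ with $\A\not\isom\S$, $\A\embedsFin\S$ and $\chara(\S)\subseteq\chara(\A)$. By (ii), every finite fragment of $\S$ is $\A$-consistent. I take a locking sequence $\sigma$ for $\A$; since $\A\embedsFin\S$ the fragment coded by $\sigma$ embeds into $\S$, so I start an $\omega$-presentation of $\S$ by realizing it, and then extend $\sigma$ to a full informant $f$ for $\S$, introducing one new element at a time together with all of its correct relations to previously introduced elements, so each $\A_{f[n]}$ is a genuine finite substructure of $\S$. By (ii) each $\A_{f[n]}$ is $\A$-consistent, hence every $f[n]$ with $n\ge|\sigma|$ is an $\A$-consistent extension of the locking sequence, so $M(f[n])\isom\A\not\isom\S$ for all large $n$. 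Thus $M$ converges on an informant for $\S\in\FrakA$ to a conjecture for $\A$, a contradiction.

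In the infinite case no single member dominates $\S$; instead every component of $\S$ lies in infinitely many isomorphism types of $\FrakB$. I build $f$ in stages, maintaining a finite $\S$-consistent fragment $\rho$ and forcing infinitely many mind changes away from $\S$-conjectures: at each stage I choose some $\A\in\FrakB$ with $\A\not\isom\S$ and $\rho$ $\A$-consistent, drive $M$ via a locking-type extension inside $\A$ to output a conjecture $\isom\A\not\isom\S$, keep that extension $\S$-consistent using $\A\embedsFin\S$ and fact (i), and then append the next element of a fixed enumeration of a presentation of $\S$. Since every component of $\S$ is eventually realized, the positive content of $f$ is $\isom\S$, while $M$ conjectures non-$\S$ structures infinitely often and cannot $\Ex_{\isom}$-converge. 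The crux, and where Condition $2$ is genuinely used, is the lemma that for every finite $F\embeds\S$ there are infinitely many $\A\in\FrakB$ with $F\embeds\A$; I expect this to be the main obstacle. It follows by combining (i) with the observation that, were some finite $F\embeds\S$ hosted by only finitely many isomorphism types of $\FrakB$, one of the finitely many components certifying $F$ through the Hall criterion would occur in only finitely many members, contradicting Condition $2$.

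For the converse I build a learner witnessing $\FrakA\in\Inf\Ex_{\isom}$. Finite separability lets me attach to each $\A\in\FrakA$ a finite \emph{separator} $D_\A\embeds\A$ such that no $\B\in\FrakA$ with $\B\not\isom\A$ simultaneously satisfies $D_\A\embeds\B$ and the relevant character containment; the nonexistence of such $\B$ is exactly the assertion that $\A$ is not a limit of its family of competitors. The learner, given $I[s]$, recovers $\A_{I[s]}$ and the approximation $\chara(\A[s])$ and outputs a conjecture for the structure $\A\in\FrakA$ of least index that is consistent with the data and whose separator is already contained in the data. Using that the character, hence the isomorphism type, of a structure with no infinite classes is determined in the limit by any informant, I argue that the target's separator eventually appears and thereafter excludes every nonisomorphic competitor, while any wrong earlier guess is eventually dropped as inconsistent; convergence to a correct conjecture follows. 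The delicate point, mirroring the two clauses of the definition of limit, is to define separators so that they jointly encode the finite-embedding and character information and behave correctly whether the set of competitors is finite or infinite.
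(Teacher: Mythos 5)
Your overall architecture matches the paper's (an adversary construction via locking sequences for ``learnable $\Rightarrow$ separable'', and a separator-based learner for the converse), but both halves have gaps at exactly the points where the paper has to work hardest.

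In the negative direction, your finite case rests on the claim that every initial segment $f[n]$ of the constructed informant for $\S$ is ``an $\A$-consistent extension of the locking sequence'', so that $M(f[n])=M(\sigma)$. The locking lemmas actually available do not deliver this: the weak locking sequence only locks against extensions agreeing with the \emph{fixed} presentation of $\A$, and the strong one only against extensions $\tau$ admitting an embedding of $\A_\tau$ into $\A$ that \emph{extends the fixed embedding} $f$ attached to $\sigma$. Your fact (ii) produces \emph{some} embedding of $\A_{f[n]}$ into $\A$, not one extending $f$, and in general none extending $f$ exists. Take the paper's own witnessing pair $\S=[5:\omega]$ (the limit) and $\A=[5:\omega,2:1]$: if the locking embedding sends a two-element class of $\A_\sigma$ into the unique size-$2$ class of $\A$, then any presentation of $\S$ must grow that class to size $5$, after which no embedding of the fragment into $\A$ extends $f$, and the locking property says nothing about $M$'s output. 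What you would need is locking against \emph{all} fragments merely embeddable into $\A$; that is not established and does not follow from the appendix. The paper sidesteps this by never asking $M$ to converge to the wrong answer: it builds a presentation of the limit structure and alternates between making it look like the limit and like a witness, forcing infinitely many mind changes, with the bookkeeping ``never expand an already closed class'' (this is where $\chara(\S)\subseteq\chara(\A)$, resp.\ condition (2), is used) guaranteeing the constructed structure really is a presentation of the limit.

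In the positive direction, the learner ``least index, consistent with the data, separator realized'' need not converge. A nonisomorphic competitor $\B$ of smaller index with $\A\embedsFin\B$ ($\A$ the target) is consistent with every finite amount of data forever, so it is never ``dropped as inconsistent''; and its separator, which must contain some $\langle k,i\rangle\in\chara(\B)\setminus\chara(\A)$, can be realized at infinitely many stages and violated at infinitely many others, because classes of $\A$ that are eventually larger than $k$ pass through size exactly $k$ along the presentation. Hence ``thereafter excludes every nonisomorphic competitor'' fails and your learner can oscillate between $\A$ and $\B$. This is precisely what the paper's ``oldest realized separator'' rule is for (output the structure whose separator has been realized continuously since the earliest stage, using the lemma that a wrong separator, once realized, is later un-realized), together with the nontrivial fact --- itself a consequence of finite separability --- that all separators are finite. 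You flag this as ``the delicate point'', but the convergence argument you give does not go through without it.
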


\begin{proof}
$(\Rightarrow):$ Assume that $\FrakA=\set{\A_i}_{i\in\omega}$ is finitely separable.  To show that $\FrakA \in \Inf\Ex_{\cong}$, we first prove that there is a learner $M$ that learns the $\biEmbedsFin$-type of any given $\A \in \FrakA$. Assume that 
$M$ reads $\A_z$ and, for all stages $s$, let $\FrakB_s\subseteq \FrakA$ be the class of equivalence structures in which $\A_z[s]$ is finitely embeddable and that are minimal with respect to $\embedsFin$. We define $M$ as follows: at stage $s$, $M$ outputs the $\biEmbedsFin$-type  of the equivalence structure in $\FrakB_s$ with least index in the enumeration $\set{\A_i}_{i\in\omega}$. 

\smallskip

We claim that via this procedure $M$ learns the $\biEmbedsFin$-type of $\A_z$. In particular, we prove  that $[\A_z]_{\biEmbedsFin}\supseteq \bigcap_{i\in \omega}{\FrakB_i}$.,

It is clear that $\A_z \in \bigcap_{i\in \omega}{\FrakB_i}$. Towards a contradiction, suppose that there is $A_w \not\biEmbedsFin A_z$ such that $\A_w \in \bigcap_{i\in \omega}{\FrakB_i}$. We distinguish three cases. 
\begin{enumerate}
\item[a)] If $A_w$ and $A_z$ are $\embedsFin$-incomparable, then there must be a component of $\A_z$ that is not a component of $\A_w$. So, there
exists a stage $s$ such that $\A_z[s]$ does not embed in $A_w$. Therefore, $A_w \notin \bigcap_{i\geq s}{\FrakB_i}$.
\item[b)] If $\A_w \embedsFin \A_z$, then there must be a component of $\A_z$ witnessing the fact that $\A_z\not\embedsFin A_w$. So,  there
exists again a stage $s$ such that $\A_z[s]$ does not embed in $A_w$. Therefore, $\A_w$ will be eventually outside from the $\FrakB_s$'s.
\item[c)] If $\A_z\embedsFin \A_w$, then, for all $s$, $\A_w\in \FrakB_s$ only if $\A_z \notin \FrakB_s$. This is because $\FrakB_s$ contains only structures that are minimal with respect to\ $\embedsFin$. But we already know that $\A_z \in \bigcap_{i\in \omega}{\FrakB_i}$. Therefore, $A_w \notin \bigcap_{i\in \omega}{\FrakB_i}$.
\end{enumerate}

This shows that $M$ correctly learns the $\biEmbedsFin$-type of any given  $\A_{z}$. Call $j$ the limit conjecture of $M$. 

\smallskip

Now we construct a learner $M^*$ that learns the isomorphism type of $\A_z$.
To choose among the structures in $[\M_j]_{\biEmbedsFin}$, $M^*$ adopts the following procedure: For $\C\in [\M_j]_{\biEmbedsFin}$, let $\sep(\C)$ be the \emph{separator of $\C$} defined as 
\[
\sep(\C)= \bigcup_{\S \in [\M_j]_{\biEmbedsFin}}\set{\min(\chara(\C) \setminus \chara(\S))}.
\]

First, note that separators of pairwise nonisomorphic structures form an anti-chain with respect to $\subseteq$. This follows from the fact that $\chara(\C) \setminus \chara(\S) \neq \emptyset$, for all $\C \not\cong\S$ in $[\M_j]_{\biEmbedsFin}$.  Otherwise, $\C$ would be a limit of $\set{\S}$, since $\chara({\C})\subseteq \chara({\S})$ and $\S \embedsFin \C$, and this would contradict the finite separability of $\FrakA$.

Second, we claim that separators are all finite. Towards a contradiction assume that, for given $\C\in [\M_j]_{\biEmbedsFin}$, $\sep({\C})$ is infinite. Denote by $\chara(\C)\upharpoonright_i$ the finite set consisting of the first $i$ elements of $\chara(\C)$. Let 
\[\mathfrak{S}_{i}=\set{ \S\not\cong \C \in [\M_j]_{\biEmbedsFin} : \chara(\C)\upharpoonright_i \;\subseteq  \chara({\S})}.
\] 
We have that $\mathfrak{S}_{i}$ is nonempty, for all $i$. Otherwise, if $k$ is the least such $\mathfrak{S}_{k}=\emptyset$, we would obtain that $\sep(\C)\subseteq \chara(\C)\upharpoonright_k$, against the hypothesis that $\sep(\C)$ is infinite.  
Suppose that $\bigcap_{i\in\omega}\mathfrak{S}_{i}$ is nonempty and contains a structure $\mathcal{C}^*$. It immediate to see that $\chara({\C})\subseteq \chara({\C^*})$. Recall that $\C^*\embedsFin \C$, since $\C^*\biEmbedsFin\C$. It follows that $\C$ is a limit of $\set{\C^*}$, which is impossible because of the finite separability of $\FrakA$.

But if all the $\mathfrak{S}_{i}$'s are nonempty, it follows that 
\[
\chara(\C)\subseteq \bigcup_{i\in\omega}\bigcap_{\S \in \mathfrak{S}_i}\chara(\S),
\]

and this contradicts  the finite separability of $\FrakA$.  Thus, we conclude that, for all $\C\in [\M_j]_{\biEmbedsFin}$,  $\sep(\C)$ is finite.

\smallskip

We aim at making use of the finiteness of separators to show that $M^*$ can eventually learn the isomorphism type of $\A_z$. We say that a finite equivalence structure $\S[t]$ \emph{realizes} a given separator $\sep({\S})$ if $\sep({\S})\subseteq \chara({\S[t]})$. Denote by $\SEP(\S[t])$ the class of the separators realized by $\S[t]$.  Note that  $\A_z$ eventually realizes its own separator, i.e., there is $s$ such that, for all $t\geq s$, $\sep({\A_z}) \in \SEP(\A_z[t])$. The problem is that $\SEP(\A_z[t])$ might consist of more than one separator for infinitely many stages, i.e., for infinitely many $t$, there might be $\A_w \not\cong\A_z$ and $\A_w \biEmbedsFin\A_z$ such that $\sep({\A_w})\in \SEP(\A_z[t])$.
Nonetheless, if any such $\sep({\A_w})$ is realized by some $\A_z[t]$, there must be $t^*>t$ such that $\sep({\A_w})\notin \SEP(\A_z[t^*])$ (otherwise,  by definition of separator, $\min(\chara({\A_w})\setminus \chara({\A_z}))$ would belong to $\chara({\A_z})$, which is impossible). So, at some given stage $t$, $M^*$ can choose the \emph{oldest} separator in $\SEP(\A_z[t])$, i.e., the separator that belongs to
\begin{equation}
\bigcap_{i\leq s\leq t}(\SEP(\A_z[s]))
\end{equation}
for the least $i$. 

\smallskip

To sum up, to learn any structure $\S\in \FrakA$ the learner $M^*$ does the following: At any given stage $t$, $M^*$ takes the output of $M(\S[t])$ as the current guess of the $\biEmbedsFin$-type of $\S$. Within the latter type $M^*$ considers only the structures whose separators are realized by $\S[t]$ and outputs the isomorphism type of the one with the oldest realized separator. By this procedure, $M^*$ $\Inf\Ex_{\cong}$-learns $\FrakA$.

\medskip

$(\Leftarrow):$ This implication can be proved via locking sequences. But instead of crudely applying Theorem \ref{theorem:lockingSequences}, we take here the opportunity of illustrating with some details  how to dynamically  build structures that serve as a counterexample to a given learning condition.

Assume that $\FrakA$ is not finitely separable. We show that $\FrakA\notin \Inf\Ex_{\cong}$. Towards a contradiction, suppose that $M$ learns $\FrakA$ and let $\A\in \FrakA$ be a limit of $\FrakA$. We construct an $\omega$-presentation $\B$ of some structure in $\FrakA$ that $M$ cannot learn. We start by constructing $\B$ as an $\omega$-presentation of $\A$ and we continue until $M$, on input $\B[s]$ for some $s$, outputs an index of $\A$. If this never happen, we obviously win: $M$ fails to learn an $\omega$-presentation of $\A$, and therefore $M$ does not $\Inf\Ex_{\cong}$-learn $\FrakA$. Otherwise, let $s$ be a stage such that $\A\cong M(\B[s])$. 
Since $\A$ is a limit of $\FrakA$, there must be 
some $\S_0\in \FrakA$ such that $\S_0 \embedsFin \A$ $\S_0\not\cong\A$,  and $\chara({\B[s]})\subseteq \chara({\S_0})$. We now extend $\B[s]$ as an $\omega$-presentation of $\S_0$, with the caution of not expanding the equivalence classes already defined in $\B[s]$ (this can always be done since $\chara({\B[s]})\subseteq \chara({\S_0})$). We continue building $\B$ as an $\omega$-presentation of $\S_0$ until we find some stage $t$ such that $M$ correctly guesses our plan, i.e., $M(\B[t])\cong\S_0$. If this happens, we now go back to $\A$ and extend $\B[t]$ as an $\omega$-presentation of $\A$. We can do that because $\S_0 \embedsFin \A$. 

By iterating this reasoning, and possibly defining many $\S_i$'s, it is not hard to see that we can force $M$ to either fail at learning some structure in $\FrakA$ (this structure being either $\A$ or some $\S_i$)  or have infinitely many mind changes. If the latter case happens, the above caution of never expanding already defined $\B$-classes when we need to make $\B$ an $\omega$-presentation of some $\S_i$  guarantees that we eventually obtain an $\omega$-presentation of $\A$. 
\end{proof}

A nice consequence of our characterization theorem is that we can separate families of equivalence structures consisting of finitely many isomorphism types from those consisting of infinitely many isomorphism types by means of the following partial analogous of Compactness.

\begin{corollary}
The following hold.
\begin{enumerate}
\item If $\FrakA /_{\cong}$ is finite, then $\FrakA\in \Inf\Ex_{\cong}$ if and only if the structures of  $\FrakA$ are pairwise $\Inf\Ex_{\cong}$-learnable.
\item There is $\FrakA\notin \Inf\Ex_{\cong}$ with $\FrakA /_{\cong}$ is infinite such that, for all $\FrakB\subseteq \FrakA$, if $\FrakB /_{\cong}$ is finite, then $\FrakB\in \Inf\Ex_{\cong}$.
\end{enumerate}
\end{corollary}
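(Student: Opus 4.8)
The plan is to prove both parts of the corollary by leveraging the characterization $\FrakA \in \Inf\Ex_{\cong} \Leftrightarrow \FrakA$ is finitely separable (Theorem~\ref{thm:characterization}), together with the two-case structure of the definition of \emph{limit} (Definition~\ref{def:finiteseparability}), since the whole point is that the finite-type case and the infinite-type case of that definition behave differently. For part~(1) I would argue that when $\FrakA/_{\cong}$ is finite, finite separability of $\FrakA$ is equivalent to pairwise finite separability, and hence to pairwise learnability. The forward direction is trivial (a subclass of a finitely separable class is finitely separable, so in particular each two-element subclass is). For the converse, suppose $\FrakA$ is not finitely separable, so some $\FrakB \subseteq \FrakA$ has a limit $\S \in \FrakA$; the key point is that when $\FrakA/_{\cong}$ is finite, $\FrakB$ is also of finite isomorphism type, so the limit is witnessed by Condition~1, i.e.\ there is a single $\A \in \FrakB \subseteq \FrakA$ with $\A \not\cong \S$, $\A \embedsFin \S$, and $\chara(\S) \subseteq \chara(\A)$. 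But then $\S$ is already a limit of the \emph{two-element} family $\set{\A}$ (or $\set{\A,\S}$), so $\set{\A,\S}$ is not finitely separable, hence by Theorem~\ref{thm:characterization} the pair $\A,\S$ is not $\Inf\Ex_{\cong}$-learnable, contradicting pairwise learnability.

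For part~(2) I would exhibit a concrete witness and verify it directly. The natural candidate, guided by Example~\ref{ex:notLearningLimitInstances} but adapted to the no-infinite-classes setting of this section, is a family of the form $\FrakA = \set{[5:n] \mid n \in \natnum} \cup \set{[5:\omega]}$, or something equivalent with $[5:\omega]$ playing the role of the limit point. First I would check that $\FrakA/_{\cong}$ is infinite (obvious) and that $\FrakA \notin \Inf\Ex_{\cong}$: the structure $\S = [5:\omega]$ is a limit of the infinite subfamily $\set{[5:n] \mid n}$ via Condition~2, because every $[5:n] \embedsFin [5:\omega]$ and is nonisomorphic to it, and $\chara([5:\omega]) = \set{\langle 5,i\rangle : i \in \natnum}$ is contained in the set of components witnessed by infinitely many members of the family (indeed every $\langle 5,i\rangle$ appears in all but finitely many $[5:n]$). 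Hence $\FrakA$ is not finitely separable and by Theorem~\ref{thm:characterization} not learnable. Then I must verify the finite-subfamily clause: any $\FrakB \subseteq \FrakA$ with $\FrakB/_{\cong}$ finite is finitely separable. Here the argument splits on whether $[5:\omega] \in \FrakB$; if not, $\FrakB$ is a finite set of finite structures $[5:n]$, which are pairwise $\embedsFin$-comparable but with strictly increasing character, so no one is a Condition-1 limit of a subfamily (the character inclusion $\chara(\S)\subseteq\chara(\A)$ forces $\S$ to be the larger one, but then $\A\embedsFin\S$ fails unless they are equal). If $[5:\omega] \in \FrakB$, the only possible limit is $[5:\omega]$ itself, but Condition~1 requires $\chara(\S)\subseteq\chara(\A)$ for some finite $\A=[5:n]$, which is impossible since $\chara([5:\omega])$ is infinite while each $\chara([5:n])$ is finite.

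The step I expect to be the main obstacle is the verification in part~(2) that \emph{every} finite-isomorphism-type subfamily $\FrakB$ is finitely separable, because finite separability quantifies over all sub-subfamilies $\FrakB' \subseteq \FrakB$ and their possible limits, so I must rule out Condition-1 limits uniformly rather than just checking the global limit $[5:\omega]$. The cleanest way to handle this is to observe that within $\set{[5:n]\mid n} \cup \set{[5:\omega]}$ the relations $\embedsFin$ and $\embeds$ essentially coincide and the characters are linearly ordered by inclusion, so a Condition-1 limit $\S$ of some $\A$ would need both $\A\embedsFin\S$ (forcing $\chara(\A)\subseteq\chara(\S)$ in the relevant sense) and $\chara(\S)\subseteq\chara(\A)$, giving $\chara(\S)=\chara(\A)$ and hence $\S\cong\A$, contradicting $\S\not\cong\A$. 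I would isolate this as a small observation about linearly character-ordered families and then apply it. A secondary subtlety worth a sentence is confirming that the definition's Condition~1 versus Condition~2 dichotomy is triggered correctly by the cardinality of $\FrakB/_{\cong}$ in each case, since the two conditions impose genuinely different quantifier patterns.
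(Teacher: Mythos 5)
Your overall strategy matches the paper's: part (1) is read off from the case split in Definition~\ref{def:finiteseparability} together with Theorem~\ref{thm:characterization} (the paper states this in one line; your reduction of a failure of finite separability to a non-separable two-element subfamily $\set{\A,\S}$ is exactly the intended argument), and part (2) is proved by exhibiting a concrete witness and invoking part (1) for its finite subfamilies. The one real defect is the witness for part (2) as literally written: the structures $[5:n]$ are \emph{finite} (domain of size $5n$), whereas every member of $\structs$ is required to have domain $\omega$, so $\set{[5:n]\mid n\in\natnum}\cup\set{[5:\omega]}$ is not a family of equivalence structures in the paper's sense and Theorem~\ref{thm:characterization} does not apply to it. You need the padding from Example~\ref{ex:notLearningLimitInstances}, i.e.\ $\FrakA=\set{[5:n,1:\omega]\mid n\in\natnum}\cup\set{[5:\omega]}$; with that replacement all of your verifications go through unchanged, since $\chara([5:\omega])$ still consists only of the components $\langle 5,i\rangle$, each realized by cofinitely many $[5:n,1:\omega]$, and the pairwise checks are as you describe. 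The paper instead uses $\A_i=[j:1-\delta_{ij}]$ (one class of every finite size except $i$), which avoids a distinguished limit point --- every $\A_i$ is a limit of the remaining ones --- but your padded family is equally valid. One minor slip: in your first case analysis, $\chara(\S)\subseteq\chara(\A)$ forces $\S$ to be the structure with \emph{fewer} components, not the larger one; your concluding summary has the implications the right way around, so nothing breaks.
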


\begin{proof}
$(1)$ follows immediately from item $(1)$ of Definition \ref{def:finiteseparability} and Theorem \ref{thm:characterization}. 

For $(2)$, let $\A_i = [j : 1 - \delta_{ij}]$, where $\delta_{ij} = \begin{cases}  1, &\text{if $i=j$;}\\ 0, &\text{otherwise,}\end{cases}$ and let $\FrakA=\set{\A_i}_{i\in\omega}$. We have that, for $i\neq j$, $\A_i$ and $\A_j$ can be finitely separated, by using $[j:1]$ as a separator for $\A_i$ and $[i:1]$ as a separator for $\A_j$. From item $(1)$ of the present corollary, it then follows that any subset of $\FrakA$ which consists of finitely many isomorphism types is $\Inf\Ex_{\cong}$-learnable. Yet, by Theorem \ref{thm:characterization},  $\FrakA\notin \Inf\Ex_{\cong}$, since $\A_i$ is a limit of $\FrakA\setminus \A_i$ for all $i$.
\end{proof}

%
%

\subsection{Bounding the Complexity of the Learners}
\label{sec:complexityOfLearning}

The procedure described above for learning any finitely separable family is obviously noneffective. It is natural to ask how much information is needed to perform it. More generally, we want to investigate what can be learned by learners of fixed complexity.

\begin{definition}
A family $\FrakA$ of computably presentable structures is \emph{uniformly enumerable by $f$} if $f$ is a total computable function such that $\set{\M_{f(i)}}_{i\in\omega}$ is a one-to-one enumeration of all structures of $\FrakA$, up to isomorphism. 
\end{definition}

The next theorem shows that computable learners fail to learn all finitely separable families, even if we restrict to uniformly enumerable ones.

\begin{thm}
There is a uniformly enumerable  $\FrakA \in \Inf\Ex_{\cong}$  such that $\FrakA \not\in \mathbf{0}$-$\Inf\Ex_{\cong}$.
\end{thm}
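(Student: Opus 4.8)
The plan is to exhibit a single uniformly enumerable family $\FrakA$ that is finitely separable (hence $\Inf\Ex_{\cong}$-learnable by Theorem~\ref{thm:characterization}), yet encodes enough noncomputable information that no computable learner can succeed. The natural strategy is to diagonalize against all computable learners, or rather to build the family so that correct learning of its members requires deciding the halting problem (or a comparably noncomputable predicate). Concretely, I would fix an effective enumeration $(\varphi_e)_{e\in\omega}$ of partial computable functions viewed as candidate learners, and design the structures of $\FrakA$ so that membership questions about which isomorphism type one is presented with are tied to whether $\varphi_e$ converges on some input, forcing any purported computable learner $\varphi_e$ to fail on at least one presentation.

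The key technical device is to reuse the construction underlying item~$(2)$ of the preceding Corollary: a family $\set{\A_i}_{i\in\omega}$ in which each $\A_i$ is finitely separable from the others via a small separator (e.g.\ structures of the form $[j:1-\delta_{ij}]$, or a similar template built from finite characters), but where the noncomputable content is hidden in \emph{which} finite separator actually appears. First I would set up the enumeration so that $\set{\M_{f(i)}}_{i\in\omega}$ lists the members of $\FrakA$ uniformly computably and one-to-one up to isomorphism — this is required for uniform enumerability and is the easy part, since each member is itself a computable structure with an explicitly given character. Then, for each potential computable learner $\varphi_e$, I would attach a coding location (a candidate component $\langle k_e, i_e\rangle$ of the character) whose presence or absence in the target is governed by a $\Sigma^0_1$ or $\Pi^0_1$ event, so that a computable learner cannot decide in the limit which isomorphism type it is facing, while a noncomputable learner of the complexity permitted by finite separability can.

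The verification splits into two halves. For the positive direction ($\FrakA\in\Inf\Ex_{\cong}$), I would check that the constructed family is finitely separable: each pair of nonisomorphic members must be distinguishable by a finite character-difference, and no subfamily may have a limit in the sense of Definition~\ref{def:finiteseparability}. This amounts to confirming that each member $\A_i$ has some component witnessing $\A_i\not\embedsFin\A_j$ for $j\neq i$, and that the infinite-family limit condition fails because no $\S$ has all its components witnessed by infinitely many distinct members. For the negative direction ($\FrakA\notin\mathbf{0}\text{-}\Inf\Ex_{\cong}$), I would argue that for each index $e$, the coding ensures the existence of an $\omega$-presentation (indeed a presentation the diagonalization places into $\FrakA$) on which $\varphi_e$ either diverges, stabilizes on a wrong isomorphism type, or changes its mind infinitely often — so $\varphi_e$ is not a correct $\Inf\Ex_{\cong}$-learner for $\FrakA$.

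The main obstacle, I expect, is reconciling the two demands simultaneously: finite separability is a \emph{semantic} smallness condition that makes the family learnable by \emph{some} learner, while the diagonalization must make every \emph{computable} learner fail, and these pull in opposite directions. The delicate point is that the finite separators forced by learnability are exactly the finite fragments a learner inspects, so I must hide the noncomputability not in whether a separator \emph{exists} (it always does, by finite separability) but in \emph{when} it is realized along a presentation — using the freedom, exploited in the proof of Theorem~\ref{thm:characterization}, to delay the appearance of the distinguishing component until after a computable learner has been forced to commit. Getting the timing of this delay to align with the convergence behavior of $\varphi_e$, uniformly in $e$, while keeping the overall enumeration $f$ total computable and one-to-one, is the crux of the argument.
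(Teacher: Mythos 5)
Your high-level plan (diagonalize against all computable learners while preserving finite separability and uniform enumerability) matches the paper's, and your closing paragraph correctly identifies the crux: the distinguishing data must be withheld until \emph{after} the learner commits. But the proposal stops at the point where the actual proof begins, and the one concrete mechanism you do propose is the wrong one. Statically coding a $\Sigma^0_1$ or $\Pi^0_1$ event into ``which component $\langle k_e,i_e\rangle$ appears in the target'' does not defeat a computable learner: the learner receives the atomic diagram of the structure, not its index, so it never needs to decide the halting-type predicate --- it simply waits for the component to show up (or not) in the data. A family whose membership is fixed in advance by noncomputable information can still be perfectly learnable by a computable learner reading the informant. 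What is needed, and what the paper does, is an \emph{interactive} construction in which the structures themselves are built by running $\varphi_e$ and reacting to its conjectures.

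Concretely, the paper builds for each $e$ a pair $\A_e,\B_e$ maintaining the invariant that at every stage they have types $[e:n,1:t_0]$ and $[e:n+1,1:t_1]$ with $\A_e[s]\subseteq\B_e[s]$. While waiting for $\varphi_e$ to output different conjectures on the two, it pads both with singletons; each time $\varphi_e$ does distinguish them (an ``expansionary stage''), it adds two new $e$-classes to $\A_e$ and three to $\B_e$, and extends an auxiliary presentation $\nu$ so as to record a mind change. The verification is a dichotomy your proposal never states: if there are finitely many expansionary stages, then $\A_e\cong[e:n,1:\omega]\not\cong[e:n+1,1:\omega]\cong\B_e$ and $\varphi_e$ fails to separate two nonisomorphic members; if there are infinitely many, then $\A_e\cong\B_e\cong[e:\omega,1:\omega]$ and $\C_e=\bigcup_s\nu_s$ is an $\omega$-presentation of that same type on which $\varphi_e$ changes its mind infinitely often. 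Without this case analysis --- in particular, without an answer to ``what if $\varphi_e$ never commits?'' and without the third presentation $\nu$ that accumulates the mind changes --- your negative direction is only a restatement of the desired conclusion. The positive direction also needs the observation that each pair $\set{[e:n,1:\omega],[e:n+1,1:\omega]}$ is finitely separable via the component $\langle e,n+1\rangle$ (and that $[e:n+1,1:\omega]\not\embedsFin[e:n,1:\omega]$), which your template $[j:1-\delta_{ij}]$ from the Corollary does not supply.
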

\begin{proof}
We construct in a uniform way a   family $\FrakA=\set{\A_e,\B_e: e\in\omega}$ of computable structures which is finitely separable, but cannot be $\Inf\Ex_{\cong}$-learned by any computable learner.

\subsubsection*{Informal strategy}
For all $e$, we want to diagonalize against the learner $\varphi_e$ by building in stages a pair of equivalence structures $\A_e,\B_e$ that satisfy the following properties at all stages $s$,
\begin{itemize}
\item  there exists $n$ such that $\A_e[s]$ has isomorphism type $[e:n,1:t_0]$ and $\B_e[s]$ has isomorphism type $[e:n+1,1:t_1]$, 
\item and $\A_e[s]\subseteq \B_e[s]$.
\end{itemize}
The idea of the construction is that we want to force $\varphi_e$ to either fail at learning $\set{\A_e,\B_e}$ or have infinitely mind changes if attemping to learn a particular $\omega$-presentation of $[e:\omega, 1:\omega]$. To do so, we wait that $\varphi_e$ produces different outputs on $\A_e$ and $\B_e$, and while waiting we extend $\A_e$ and $\B_e$ as to make their isomorphism types, in the limit, to be respectively $[e:n,1:\omega]$ and $[e:n+1,1:\omega]$. 
If $\varphi_e(\A_e)\downarrow \neq \varphi_e(\B_e)\downarrow$ never happens, then $\varphi_e$ does not $\Inf\Ex_{\cong}$-learns $\set{\A_e,\B_e}$ since $\A_e \not\cong\B_e$. Otherwise, if at some stage we obtain $\varphi(\A_e)\downarrow \neq \varphi(\B_e)\downarrow$, we add to $\A_e$ two new equivalence classes of size $e$ and to $\B_e$ the same two equivalence classes and an additional one of size $e$. By iterating this reasoning we obtain that, if $\A_e$ and $\B_e$ have eventually isomorphism type $[e:\omega,1:\omega]$, then we can produce an $\omega$-presentation of $[e:\omega,1:\omega]$ on which $\varphi_e$ have infinitely many mind changes.

\subsubsection*{The construction}
We build in stages strings $\sigma$ and $\tau$ such that $\A_e=\cup_s \sigma_s$ and $\B_e =\cup_s \tau_s$, and a string $\nu$. During the construction we distinguish between expansionary and nonexpansionary stages and we make use of a counter $l$ that keeps track of the last expansionary stage. 

\medskip

\noindent\textbf{Stage $0$}: Let $\sigma_0=\tau_0=\nu_0=$ the empty string $\lambda$, and set $l:=0$.

\smallskip

\noindent\textbf{Stage $s+1$}: Assume that we have built $\sigma_s$ and $\tau_s$ with $\dom(\sigma_s)=\dom(\tau_s)$ and let $z_s$ be $\max(\dom(\sigma_s))+1$. We distinguish two cases.
\begin{enumerate}
\item If there exists $l\leq v\leq s$ such that $\varphi_{e,s+1}(\sigma_v)\downarrow \neq\varphi_{e,s+1}(\tau_v)\downarrow$, call $s+1$ an \emph{expansionary stage} and set $l:=s+1$. For $k\in\set{0,1,2}$, let  
\[
I_{k}=\set{z_{s}+ke,z_{s}+ke+1,\ldots,z_{s}+ke+e-1}.
\]
Let $\sigma_{s+1}$ and $\tau_{s+1}$ be the following strings with domain $\set{x: 0\leq x\leq z_s+3e}$,

\[
\sigma_{s+1}(\langle x,y\rangle)=\begin{cases}
\sigma_{s}(\langle x,y\rangle) &\text{$x,y\in\dom(\sigma_s)$,}\\
1 &\text{$x=y \vee (\exists k \in \set{0,1})(x,y \in I_{k})$},\\
0 &\text{otherwise.}\\
\end{cases}
\]

and 

\[
\tau_{s+1}(\langle x,y\rangle)=\begin{cases}
\tau_{s}(\langle x,y\rangle) &\text{$x,y\in\dom(\tau_s)$,}\\
1 &\text{$x=y \vee (\exists k \in \set{0,1,2})(x,y \in I_{k})$},\\
0 &\text{otherwise.}\\
\end{cases}
\]

Finally, recall that $\varphi_{e,s+1}(\sigma_v)\downarrow \neq\varphi_{e,s+1}(\tau_v)\downarrow$. Without loss of generality, assume that $\varphi_{e,s+1}(\nu_{l})\neq\varphi_{e,s+1}(\sigma_v)$. Define $\nu_{s+1}=\sigma_{s+1}$ (the construction of $\sigma$ and $\tau$ guarantees that $\nu_{s+1}\supseteq \nu_{s}$).

\item Otherwise, let $\sigma_{s+1}\supseteq \sigma_s$ be the  only string such that $\dom(\sigma_{s+1})=\dom(\sigma_{s})\cup\set{z_s}$, $\sigma_{s+1}(\langle z_s,z_s\rangle)=1$, and  $\sigma_{s+1}(\langle x,y\rangle)=0$ if either $x=z_s$ or $y=z_s$. We define $\tau_{s+1}$ in the same way. Finally, let $\nu_{s+1}=\nu_{s}$.

\end{enumerate}

\subsubsection*{The verification}

It is immediate to see that $\A_e=\cup_s \sigma_s$ and $\B_e=\cup_s \tau_s$ are computable. We now distinguish two cases. 

First, suppose that there exist only finitely many expansionary stages. If so, it follows from the construction that there is a number $n$ such that $\A_e$ has isomorphism type $[e:n,1:\omega]$ and $\B_e$ has isomorphism type $[e:n+1,1:\omega]$. Towards a contradiction, suppose that $\varphi_e$ $\Inf\Ex_{\cong}$-learns $\set{\A_e,\B_e}$. If so, we have there are $\M_a\cong \A_e$ and $\M_b \cong \B_e$, with $\M_a \not\cong \M_b$, such that $\varphi_e$ on input $\A_e$ eventually converges to $a$, and on input $\B_e$ eventually converges to $b$. This means that there is a stage $s$ such that, for all $t\geq s$,
\[
\varphi_e(\sigma_t)=a\neq b=\varphi_e(\tau_t).
\]
But this immediately implies that there are infinitely many expansionary stages, contradicting the initial hypothesis. Hence, $\varphi_e$ does not $\Inf\Ex_{\cong}$-learn $\set{\A_e,\B_e}$. 

Second, suppose that there are infinitely many expansionary stages. From the construction, it follows that $\A_e$ and $ \B_e$ are isomorphic and they have isomorphism type $[e:\omega,1:\omega]$. The existence of infinitely many singletons in $\A_e$ and $\B_e$ comes from the fact that, if $s$ is an expansionary stage, then the number $z_s +3e$ is a singleton of $\A_e$ and a singleton of $\B_e$. Let $\C_e=\cup_s \nu_s$. We have that also $\C_e$ has isomorphism type $[e:\omega,1:\omega]$, since at any expansionary stage new $\C_e$-classes of size $e$ are defined and they never expand later. If $\varphi_e$ $\Inf\Ex_{\cong}$-learns $\set{\A_e,\B_e}$, then $\varphi_e$ has to learn $\C_e$ as well, since $\C_e$ is an $\omega$-presentation of $\A_e$ and $\B_e$. So, there must be some $\M_c \cong \C_e$ such that $\varphi_e$ on input $\C_e$ eventually converges to $c$. This means that there is a stage $s$ such that, for all $t\geq s$, $\varphi_e(\nu_t)=c$. But this is impossible, since $\nu$ is constructed in such a way that, if $t_1$ and $t_2$ are consecutive expansionary stages, then $\varphi_e(\nu_{t_1})\neq \varphi_e(\nu_{t_2})$. We conclude that $\varphi_e$ does not $\Inf\Ex_{\cong}$-learn $\C_e$, and thus does not learn $\set{\A_e,\B_e}$.
\end{proof}

 Next, we want to consider the computational complexity of learning. 
Although computable learners do not have enough power to grasp all finitely separable families that are uniformly enumerable, two jumps suffice to learn equivalence structures with no infinite classes. 

\begin{thm}
Let $\FrakA$ be uniformly enumerable and such that no equivalence structure from $\FrakA$ has infinite equivalence classes. The following are equivalent. 

\begin{enumerate}
\item $\FrakA$ is finitely separable.
\item  $\FrakA \in \Inf\Ex_{\cong}$.
\item $\FrakA \in \mathbf{0''}\mbox{-}\Inf\Ex_{\cong}$
\end{enumerate}
\end{thm}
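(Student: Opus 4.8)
The equivalence $(1)\Leftrightarrow(2)$ is precisely Theorem~\ref{thm:characterization}, applied to the no-infinite-class family $\FrakA$, and $(3)\Rightarrow(2)$ is immediate since every $\mathbf{0''}$-computable learner is in particular an admissible learner for $\Inf\Ex_{\cong}$. Thus the only new content is $(1)\Rightarrow(3)$: given a uniformly enumerable, finitely separable $\FrakA$, I would produce a $\mathbf{0''}$-computable learner. The plan is to take the (noneffective) learner $M^*$ built in the proof of Theorem~\ref{thm:characterization} and verify that, when $\FrakA$ is uniformly enumerable by a computable $f$, every operation it performs can be carried out relative to $\mathbf{0''}$; together with $(3)\Rightarrow(2)\Rightarrow(1)$ this closes the triangle.

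First I would bound the arithmetical complexity of the relations $M^*$ uses. Fix the uniform enumeration $\set{\M_{f(i)}}_{i\in\omega}$, so the relations $E_{f(i)}$ are uniformly computable. For a finite structure $\A^*[s]$ extracted from the informant, ``$\A^*[s]\embeds\M_{f(i)}$'' asks for an embedding of a finite object and is $\Sigma^0_1$ uniformly in $s,i$, hence decidable from $\mathbf{0'}$. Using the definition $\A\embedsFin\B \Leftrightarrow \forall t\,(\A[t]\embeds\B)$ shows $\embedsFin$, and therefore $\biEmbedsFin$, to be $\Pi^0_2$, hence $\mathbf{0''}$-decidable. Because $\FrakA$ has no infinite classes, every class is finite, so ``$\M_{f(e)}$ has at least $i$ classes of size exactly $k$'' is $\Sigma^0_2$ uniformly in $e,k,i$, and thus $\mathbf{0''}$ decides membership in $\chara(\M_{f(e)})$. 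This is exactly where the no-infinite-class hypothesis enters: it keeps the character at the $\Sigma^0_2$ level and guarantees that the character determines the isomorphism type. Finally, the separators $\sep(\C)$ are finite by the proof of Theorem~\ref{thm:characterization}, they are assembled from characters and $\biEmbedsFin$-classes, and realization ``$\sep(\C)\subseteq\chara(\A^*[s])$'' merely compares a finite separator with the computable character of a finite structure.

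The main obstacle is that two ingredients of $M^*$ quantify over the whole index set: $\embedsFin$-minimality of $\M_{f(i)}$ among the structures above $\A^*[s]$, and the selection of a least index. Taken literally, minimality reads $\forall j\,\neg(\A^*[s]\embeds\M_{f(j)}\wedge \M_{f(j)}\embedsFin\M_{f(i)}\wedge\M_{f(i)}\not\embedsFin\M_{f(j)})$, a $\Pi^0_3$ predicate, which would require $\mathbf{0'''}$. The resolution is to exploit that we are in the \emph{limit} setting: I would restructure $M^*$ so that at stage $s$ it ranges only over indices $j\le s$ and over the finite data $\A^*[s]$. Minimality \emph{within} the finite index set $\set{j\le s}$ is then a finite Boolean combination of $\Sigma^0_2$ and $\Pi^0_2$ facts, and likewise for the bounded approximations to the $\biEmbedsFin$-type guess, to the separators (which are finite, so these approximations stabilize), and to the oldest realized separator. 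Each stage therefore issues only finitely many $\Sigma^0_2/\Pi^0_2$ queries and is $\mathbf{0''}$-computable.

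It then remains to check that this bounded implementation has the same limit as $M^*$, so that convergence to the correct isomorphism type is inherited from Theorem~\ref{thm:characterization}. This should be routine: every index eventually enters the search, every finite fact about $\A^*$ is eventually available, and every $\B$ with $\A^*\not\embedsFin\B$ eventually leaves the approximation to $\set{\B:\A^*[s]\embeds\B}$; hence for large $s$ the minimal structures, the guessed $\biEmbedsFin$-type, and the oldest realized separator all agree with those of $M^*$. I expect verifying this stabilization---in particular that restricting minimality to indices $\le s$ does not prematurely lock the learner onto a wrong conjecture---to be the one place demanding genuine care, but it follows the same intersection analysis already carried out for $\bigcap_{i}\FrakB_i$ in the proof of Theorem~\ref{thm:characterization}.
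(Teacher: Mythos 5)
Your proposal is correct and follows essentially the same route as the paper: the paper also reduces everything to effectivizing the learner $M^*$ from Theorem~\ref{thm:characterization}, using that $\embedsFin$ is $\Pi^0_2$ (hence $\mathbf{0''}$-decidable), that the absence of infinite classes keeps character facts at the $\Sigma^0_2$ level, and that separators are finite, while bounding the quantifiers over the index set by the current stage (the paper's sets $X_s$ and $\mathcal{F}_n$ are exactly your restriction to indices $j\le s$). The stabilization argument you flag as needing care is handled in the paper exactly as you anticipate, by appealing to the intersection analysis already carried out for $\bigcap_i\FrakB_i$.
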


\begin{proof}
``$(1) \Leftrightarrow (2)$'' is the content of Theorem \ref{thm:characterization}. The direction ``$(3) \Rightarrow (2)$'' is trivial. We prove  ``$(2) \Rightarrow (3)$''. 

Assume that $\FrakA\in\Inf\Ex_{\cong}$ is uniformly enumerable by $f$. Let $M$ and $M^*$ be defined as in the proof of Theorem \ref{thm:characterization}; in particular, recall that $M^*$ $\Inf\Ex_{\cong}$-learns $\FrakA$. We follow the steps of the  proof of Theorem \ref{thm:characterization} to show that there are $\mathbf{0}''$-computable $N$ and $N^*$ such that $N$ eventually coincides with $M$ and $N^*$ eventually coincides with $M^*$. Fix $\A \in \FrakA$.

Note that, for all $s$, the following set is $\mathbf{0}''$-computable
\[
X_s=\set{i\leq s: \A[s]\embedsFin \M_{f(i)} \wedge (\forall j\leq s)(\M_{f(j)} \not\embedsFin \M_{f(i)}) }.
\]
  This follows from the fact that, given any two computable structures  $\A,\B\in\mathbb{E}$, $\A \embedsFin \B$  holds if and only if $(\forall s \exists t)(\A[s] \embeds \B[t])$, and  $\mathbf{0}''$ can decide this $\Pi^0_2$ formula.  Define $N(n)=\min(X_n)$. By reasoning as in the proof of Theorem \ref{thm:characterization}, it is not difficult to see that   $\bigcap_{n\in\omega} \M_{N(n)}$ is contained in the $\biEmbedsFin$-type of $\A$.

We now observe that the family of separators of computable structures in $[\M_{N(n)}]_{\biEmbedsFin}$ is uniformly c.e.\ in $\mathbf{0}''$. Indeed, given $\M_{f(z)} \in [\M_{N(n)}]_{\biEmbedsFin}$, by definition of separator we obtain  
\[
x \in \sep({\M_{f(z)}}) \Leftrightarrow (\exists j)[\M_{f(j)} \biEmbedsFin \M_{f(z)} \wedge  x=\min(\chara({\M_{f(z)}})\setminus \chara({\M_{f(j)}}))].
\]

Since all the $\M_{f(i)}$'s have no infinite equivalence classes,  $\chara({\M_{f(z)}})\setminus \chara({\M_{f(j)}})$ is a $\Sigma^0_2$ set. Moreover, as already observed,  to ask whether  $\M_{f(j)} \biEmbedsFin \M_{f(z)}$ holds is a $\Pi^0_2$ question. So, the overall condition is $\Sigma^0_3$, and thus c.e.\ in $\mathbf{0}''$. We can conveniently approximate such separators as follows. At stage $s$, $N^*$ chooses the structure in  $\mathcal{F}_n=[\M_{N(n)}]_{\biEmbedsFin}\cap \set{\M_{f(i)}}_{i\leq n}$ whose separator, restricted to the elements of $\mathcal{F}_n$, is  the oldest realized by $\A_n$ (notice that to check whether a given separator is realized by $\A[s]$ can be done effectively, since any separator is finite). $N^*$ so defined is $\mathbf{0}''$-computable and eventually outputs the same value of $M^*$, hence $N^*$ $\Inf\Ex_{\cong}$-learns $\FrakA$.
  \end{proof}

The next question is left open.

\begin{question}
Is there a uniform enumerable family $\FrakA\in \Inf\Ex_{\cong} \setminus 0'$-$\Inf\Ex_{\cong}$?
\end{question}

\section{Related Learning Settings}

In this section we consider several learning criteria related to $\Inf\Ex_{\isom}$ and show how they compare. This provides us with many examples of families of structures which are learnable in one setting, but not another. 

\smallskip

First, we characterize which finite collections of equivalence structures can be finitely learned.

\begin{thm}\label{thm:FinLearningOfFiniteClasses}
Let $\FrakA$ be a family of equivalence relations such that $\FrakA /_{\cong}$ is finite. The following are equivalent.
\begin{enumerate}
	\item $\FrakA \in \Inf\Fin_{\isom}$.
	\item $\forall \A,\B \in \FrakA: \A \embedsFin \B \Rightarrow \A \isom \B$.
	\item\label{it:finLearningPartThree} $\FrakA /_{\mathclose{\isom}}$ is an anti-chain with respect to $\embedsFin$.
\end{enumerate}
\end{thm}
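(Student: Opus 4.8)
The plan is to prove the cycle $(1)\Rightarrow(2)$, $(2)\Leftrightarrow(3)$, and $(3)\Rightarrow(1)$. I would dispose of $(2)\Leftrightarrow(3)$ first, as it is purely combinatorial. The relation $\embedsFin$ is invariant under isomorphism in both arguments, since $\A\embedsFin\B$ holds iff every finite substructure of $\A$ embeds into $\B$ (every finite substructure of $\A$ sits inside some $\A[s]$); hence $\embedsFin$ descends to a preorder on $\FrakA/_{\isom}$. Reading ``anti-chain'' as ``distinct $\isom$-classes are $\embedsFin$-incomparable'', condition (2), applied to both orderings of a nonisomorphic pair, says exactly that two nonisomorphic members of $\FrakA$ are never $\embedsFin$-comparable, which is precisely (3); the reflexive instance $\A\embedsFin\A$ is harmless, as anti-chains only constrain distinct classes.

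For $(1)\Rightarrow(2)$ I would argue by contraposition through an adversary/extension argument. Suppose $\A,\B\in\FrakA$ with $\A\embedsFin\B$ but $\A\not\isom\B$, and suppose for contradiction that $M$ finitely learns $\FrakA$. Fix an $\omega$-presentation $\A^*$ of $\A$ and an informant $I$ for it; since $M$ finitely learns $\A^*$, there is $n$ with $M(I[n])=e$ where $\M_e\isom\A$. Let $\sigma=I[n]$ and let $D$ be the finite set of elements it mentions. The key observation is that the genuinely induced substructure $\A^*\upharpoonright D$ is a finite substructure of $\A^*\isom\A$, so $\A^*\upharpoonright D\embeds\B$ by $\A\embedsFin\B$. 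Using such an embedding I can place $D$ inside an $\omega$-presentation $\B^*$ of $\B$ so that the equivalence relation of $\B^*$ restricted to $D$ agrees with that of $\A^*\upharpoonright D$; then every positive and negative fact occurring in $\sigma$ holds in $\B^*$, so $\sigma$ is an initial segment of an informant $J$ for $\B^*$. On $J$ we have $M(J[n])=M(\sigma)=e$, so $e$ lies in the range of $M$'s learning sequence on $J$. But $J$ is an informant for the presentation $\B^*$ of $\B\in\FrakA$, so the $\Fin$ restriction forces this range into $\set{e',?}$ for some $e'$ with $\M_{e'}\isom\B$; as $e$ is in the range, $e'=e$ and $\M_e\isom\B$, contradicting $\M_e\isom\A\not\isom\B$.

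For $(3)\Rightarrow(1)$ I would build the finite learner explicitly, using that $\FrakA/_{\isom}$ is finite with representatives $\S_1,\dots,\S_r$, pairwise $\embedsFin$-incomparable, and writing $\S_i$ for the hidden target. Call $\S_j$ \emph{still possible at $\sigma$} if some $\omega$-presentation of $\S_j$ is consistent with all positive and negative facts in $\sigma$. Possibility only shrinks as $\sigma$ grows, and $\S_i$ is possible at every $\sigma$. The learner outputs $?$ until exactly one type is still possible and then commits to a fixed index for that type. Since $\S_i$ is always possible, a singleton possibility set can only be $\set{i}$, so the single commitment is $\isom$-correct. It remains to check that a unique type is eventually isolated: for each $j$ with $\S_j\not\isom\S_i$, incomparability gives $\S_i\not\embedsFin\S_j$, so some finite substructure $F\embeds\S_i$ has $F\not\embeds\S_j$. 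Once the informant has classified every pair among a finite set $D_F$ of the presented structure whose induced substructure contains $F$ — which happens in the limit, and is exactly where negative information is indispensable — any presentation of $\S_j$ consistent with $\sigma$ would have to contain a copy of $F$, contradicting $F\not\embeds\S_j$; thus $\S_j$ becomes impossible. With only finitely many competitors, the target is eventually the unique possibility.

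The main obstacle, and the part I would be most careful about, is the interplay between the partial information in $\sigma$ and the notion of consistency. One is tempted to work with the finite structure $\A_\sigma$ (positive pairs closed transitively, all else declared unrelated), but $\A_\sigma$ need not embed into $\A$ at all: unrevealed equivalences make $\A_\sigma$ a strict refinement of the true induced structure (two positive-closure classes may be forced distinct in $\A_\sigma$ yet be subsets of a single class of $\A$). For the same reason ``$\A_\sigma\embeds\S$'' is not the right formalization of ``$\S$ is still possible''. The correct moves are to extend via the genuinely induced substructure $\A^*\upharpoonright D$ in the $(1)\Rightarrow(2)$ direction, and to define possibility through existence of a consistent completion in the $(3)\Rightarrow(1)$ direction; getting these two notions right, and verifying that the negative facts eventually pin down the induced structure on any fixed finite domain, is the crux of the argument.
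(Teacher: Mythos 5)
Your proof is correct, and in the crucial direction $(1)\Rightarrow(2)$ it is essentially the paper's argument: both fix a finite segment $\sigma$ on which the finite learner has already committed to a conjecture for $\A$, use $\A \embedsFin \B$ to realize $\sigma$ as an initial segment of an informant for an $\omega$-presentation of $\B$, and conclude that the committed conjecture must be correct for $\B$ as well. Where you diverge is in the other legs. The paper derives $(3)$ from $(2)$ by extracting, for each $\A \in \FrakA$, a finite characteristic substructure $K(\A)$ embedding into no nonisomorphic member of $\FrakA$, and then builds the finite learner as a \emph{positive detector}: it conjectures $\A$ as soon as $K(\A)$ is found inside $\A_\sigma$, with an explicit caveat allowing the embedding to merge classes of $K(\A)$ that $\sigma$ has not yet separated. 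You instead dispatch $(2)\Leftrightarrow(3)$ as the purely combinatorial statement it is, and build the learner by \emph{elimination}: output $?$ until exactly one isomorphism type admits a presentation consistent with the data. The two learners are dual formulations of the same idea --- your finite witness $F \embeds \S_i$ with $F \not\embeds \S_j$ that renders $\S_j$ impossible is precisely the paper's $K(\S_i)$ --- but your consistency-based formulation buys a cleaner treatment of the pitfall you correctly identify, namely that $\A_\sigma$ is a refinement of the true induced substructure; the paper must instead patch this with its relaxed-embedding condition and with the ``without loss of generality'' step in the $(1)\Rightarrow(2)$ direction.
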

\begin{proof}
Suppose first $\FrakA \in \Inf\Fin_{\isom}$, witnessed by some learner $M$. Let $\A,\B \in \FrakA$ with $\A \embedsFin \B$. Let $\sigma$ describe a part of $\A$ such that $M$ makes a (correct) conjecture for $\A$ on $\sigma$. Without loss of generality, suppose that all mentioned items of $\sigma$ which are equivalent in $\A$ are mentioned equivalent in $\sigma$. We have $\A_\sigma$ (the finite structure coded by $\sigma$) is now a finite substructure of $\A$, so, by supposition, $\A_\sigma \embeds \B$. This shows that $\sigma$ can be extended to an informant $I$ for (an isomorphic copy of) $\B$. Since $M$ already makes an output on $\sigma$, the output of $M$ on $I$ is $M(\sigma)$. This shows that $M(\sigma)$ is a correct conjecture for $\B$, which gives
$$
\A \isom \M_{M(\sigma)} \isom \B.
$$

Suppose now $\forall \A,\B \in \FrakA: \A \embedsFin \B \Rightarrow \A \isom \B$. For each $\A \in \FrakA$, let $K(\A)$ be a finite substructure of $\A$ such that, for all $\B \in \FrakA$ with $\A \not\isom \B$, $K(\A) \not\embeds \B$ (which exists since $\FrakA$ contains only finitely many $\isom$-types). Then, for all $\A, \B \in \FrakA$ with $\A \not\isom \B$ we have that neihter $K(\A)$ is embeddable into $K(\B)$ nor vice versa. This is equivalent to \ref{it:finLearningPartThree}.

Using such a list of pairwise incomparable finite substructures, we can define a learner $M$ such that, on input $\sigma$, $M(\sigma)$ is a conjecture for $\A \in \FrakA$ if $K(\A) \embeds \A_\sigma$ with an embedding which may not map elements from different equivalence classes of $K(\A)$ into elements of different equivalence classes of $\A_\sigma$, unless $\sigma$ explicitly contains the information that these equivalence classes are different. If no such $\A$ exists (which would be necessarily unique), then $M(\sigma) = ?$. Clearly, this learner $\Inf\Fin_{\isom}$-learns $\FrakA$.
\end{proof}
Note that the above proof extends to any equivalence relation on structures in place of $\isom$.

\smallskip

Next we show that that there is a class of two structures which is learnable up to bi-embeddability, but not up to isomorphism, showing Example~\ref{ex:biEmbedsNotIsom} formally.

\begin{thm}\label{thm:computablyBiembedButNotIsomLearnableShort}
The following holds
\[
\Inf\Ex_{\cong} \subset \Inf\Ex_{\approx}.
\]
\end{thm}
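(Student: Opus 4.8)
The statement $\Inf\Ex_{\cong} \subset \Inf\Ex_{\approx}$ is a strict inclusion, so the plan splits into two parts: showing the inclusion holds and then exhibiting a witness to strictness.

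For the inclusion $\Inf\Ex_{\cong} \subseteq \Inf\Ex_{\approx}$, the idea is that learning up to isomorphism is a refinement of learning up to bi-embeddability, since $\A \cong \B$ implies $\A \approx \B$. More precisely, suppose $\FrakA \in \Inf\Ex_{\cong}$ via a learner $M$ that outputs conjectures $e$ with $\M_e \cong \A$. Since the enumeration $(\M_i)_{i\in\omega}$ lists all computable equivalence structures up to isomorphism, each bi-embeddability class is a union of isomorphism classes; thus the very same learner $M$, viewed as producing conjectures interpreted up to $\approx$, already satisfies $\Ex_{\approx}(p,\A^*)$: if $p$ converges to $e$ with $\M_e \cong \A^*$, then a fortiori $\M_e \approx \A^*$. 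Hence $M$ witnesses $\FrakA \in \Inf\Ex_{\approx}$. This direction is essentially immediate once one unwinds Definition~\ref{def:exlearn} with $\sim$ being $\cong$ versus $\approx$.

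For strictness, the plan is to produce a family that is $\Inf\Ex_{\approx}$-learnable but not $\Inf\Ex_{\cong}$-learnable, and the natural candidate is exactly the pair from Example~\ref{ex:biEmbedsNotIsom}, namely $\FrakA = \set{[5:\omega,2:1],\, [5:\omega]}$. These two structures are bi-embeddable: $[5:\omega]$ embeds into $[5:\omega,2:1]$ trivially, and $[5:\omega,2:1]$ embeds into $[5:\omega]$ by sending the size-$2$ class inside one of the infinitely many size-$5$ classes. Since the whole family lies in a single $\approx$-class, a constant learner $\Inf\Ex_{\approx}$-learns $\FrakA$, giving $\FrakA \in \Inf\Ex_{\approx}$. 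To show $\FrakA \notin \Inf\Ex_{\cong}$, I would invoke the characterization Theorem~\ref{thm:characterization}: it suffices to check that $\FrakA$ is \emph{not} finitely separable. Indeed $\S = [5:\omega,2:1]$ is a limit of $\FrakB = \set{[5:\omega]} \subseteq \FrakA$ in the sense of Definition~\ref{def:finiteseparability}(1), since $\A = [5:\omega] \not\cong \S$, $\A \embedsFin \S$ (every finite fragment of $[5:\omega]$ embeds into $[5:\omega,2:1]$), and $\chara(\S) = \chara([5:\omega,2:1])$ contains $\chara(\A) = \chara([5:\omega])$ — wait, the containment required is $\chara(\S)\subseteq\chara(\A)$, so I must instead take $\A = [5:\omega,2:1]$ as the witness and $\S = [5:\omega]$ as the limit: then $\chara([5:\omega]) \subseteq \chara([5:\omega,2:1])$, $[5:\omega] \not\cong [5:\omega,2:1]$, and $[5:\omega,2:1]\embedsFin[5:\omega]$. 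Thus $\S=[5:\omega]$ is a limit of $\set{[5:\omega,2:1]}$, so $\FrakA$ is not finitely separable and, by Theorem~\ref{thm:characterization}, $\FrakA \notin \Inf\Ex_{\cong}$.

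The main obstacle is getting the direction of the limit relation right in Definition~\ref{def:finiteseparability}(1): one must be careful that the character containment $\chara(\S)\subseteq\chara(\A)$ and the finite-embeddability $\A\embedsFin\S$ point at the correct members of the pair, as the two conditions run in opposite directions. Once the witness $\A = [5:\omega,2:1]$, $\S = [5:\omega]$ is chosen correctly, the verification of each clause (non-isomorphism, finite embeddability of every finite fragment, and character inclusion) is routine. Combining the inclusion with this witness yields the strict inclusion $\Inf\Ex_{\cong} \subset \Inf\Ex_{\approx}$.
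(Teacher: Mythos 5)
Your proposal is correct and follows essentially the same route as the paper: the inclusion is immediate from the definitions, and strictness is witnessed by $\set{[5:\omega],[5:\omega,2:1]}$, which a constant learner $\Inf\Ex_{\approx}$-learns but which fails finite separability (with $[5:\omega]$ as a limit of $\set{[5:\omega,2:1]}$) and hence is not $\Inf\Ex_{\cong}$-learnable by Theorem~\ref{thm:characterization}. Your in-line self-correction lands on the right assignment of the roles of $\A$ and $\S$ in Definition~\ref{def:finiteseparability}(1), matching the paper's argument.
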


\begin{proof}
It is obvious that any family that is learnable up to isomorphism is also learnable up to bi-embeddability. This show that $\Inf\Ex_{\cong}\subseteq \Inf\Ex_{\approx}$. To see that $\Inf\Ex_{\approx}\nsubseteq\Inf\Ex_{\cong}$, let $\A$ be a structure of type $[5:\omega]$ and $\B$ a structure of type $[5:\omega,2:1]$. We have that $\A\approx \B$ and thus, trivially, $\set{\A,\B}\in \Inf\Ex_{\approx}$ by a learner that always conjectures the isomorphism type of $\A$. On the other hand, $\A$ is clearly a limit of $\B$ (see Definition \ref{def:finiteseparability}), and so $\set{\A,\B}$ is not finitely separable. By Theorem \ref{thm:characterization}, this means that $\set{\A,\B}\notin\Inf\Ex_{\cong}$.
\end{proof}

\smallskip

Finally we consider learning without negative information, that is, learning from text rather than from informant. We establish with Theorem~\ref{thm:textEquivalentToInformant} that these two settings are the same as long as only families of structures without infinite equivalence classes are considered; Theorem~\ref{thm:textNoteEquivalentToInformant} then shows that, for families of structures with infinite equivalence classes, we get a separation of learning power.

\begin{thm}\label{thm:textEquivalentToInformant}
Let $\sim$ be any equivalence relation on $\structs$.
Let $\FrakA$ be such that none of the elements of $\FrakA$ has an infinite equivalence class. The following are equivalent.
\begin{enumerate}
	\item $\FrakA$ is $\Inf\Ex_{\sim}$-learnable.
	\item $\FrakA$ is $\Text\Ex_{\sim}$-learnable.
\end{enumerate}
\end{thm}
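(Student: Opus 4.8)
The plan is to prove the equivalence by establishing the two implications separately, spending almost all of the effort on $(1)\Rightarrow(2)$. The implication $(2)\Rightarrow(1)$ is the routine direction: an informant carries strictly more information than a text. Given a $\Text\Ex_\sim$-learner $M'$, I would define an informant-learner $M$ by the stagewise map that forgets negative information, i.e.\ from an informant prefix $I[n]$ produce the text prefix obtained by replacing each positively labelled pair $((x,y),1)$ by $(x,y)$ and each negatively labelled pair by the pause symbol $\#$. Since $\content^+(I)=E=\content(\text{of the resulting text})$, this map sends informants for $\A$ to texts for $\A$ and is prefix-preserving, so setting $M(I[n])$ to be $M'$ applied to the produced text prefix yields a successful $\Inf\Ex_\sim$-learner. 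Note that this direction does not use the hypothesis that classes are finite.

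For the substantive direction $(1)\Rightarrow(2)$, suppose $M$ is an $\Inf\Ex_\sim$-learner for $\FrakA$; I want a text-learner $M'$. The guiding idea is that, because every class is finite, the positive data determines the whole structure in the limit, so I should reconstruct an informant from the text and run $M$ on it. Concretely, from a text prefix $T[n]$ I would form a guessed informant $\sigma_n$ exactly as in the definition of $\A_\sigma$: take the transitive closure of the positively mentioned pairs, declare all remaining pairs among the mentioned elements non-related, and let $M'(T[n])=M(\sigma_n)$. The key correctness observation, and the place where finiteness of classes is essential, is a stabilisation fact: since the positive content is contained in $E$, its transitive closure is contained in $E$, so no spurious equivalence is ever asserted; hence a genuinely non-related pair is declared non-related at \emph{every} stage, while a genuinely related pair is eventually revealed and then declared related forever. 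Thus the guessed label of each fixed pair stabilises to its true value, and the $\sigma_n$ converge pointwise to a genuine informant $I$ for the presentation being read. (For infinite classes this fails, which is precisely why Theorem~\ref{thm:textNoteEquivalentToInformant} gives a separation there.)

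The main obstacle is that the $\sigma_n$ are \emph{not} nested: a pair guessed non-related may later flip to related, so $\sigma_n$ is not a genuine prefix of any single informant, and at each stage its frontier may carry incorrect non-relatedness claims about still-unrevealed related pairs. Since $M$ is only guaranteed to converge on genuine informants, I cannot directly invoke its convergence on the $\sigma_n$. To overcome this I would refine the construction so that negative facts are committed lazily and revised: maintain a current committed informant prefix intended to be an initial segment of $I$, and whenever the text reveals that a pair previously committed as non-related is in fact related, retreat past that pair and re-extend with corrected labels. Here finiteness does the real work: every class stabilises after finitely many stages, so each incorrect non-relatedness guess is refuted only finitely often and every bounded initial segment of the reconstructed informant eventually stabilises to the correct prefix of $I$.

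Granting this, convergence of $M'$ should follow from a locking-sequence argument in the style of Theorem~\ref{theorem:lockingSequences}: fix a $\sim$-locking sequence $\alpha$ for the presentation along $I$, so that $M(\alpha)\sim\A$ and $M$ does not change its mind on extensions of $\alpha$ that are consistent with $\A$. Once the reconstructed prefix has stabilised beyond the length of $\alpha$ and consists only of facts that genuinely hold in the presentation, the output of $M$ is pinned to a $\sim$-correct index, and $M'$ converges to it. I expect the delicate technical heart to be exactly the certification of non-relatedness, equivalently the detection that a finite class is complete: this is not decidable at any finite stage, and the argument must instead route around it through the finiteness-driven ``only finitely many revisions'' property rather than through any finite certificate, so the care lies in showing that the committed prefix one actually feeds to $M$ is, from some stage on, a genuine and lengthening initial segment of $I$ that extends $\alpha$.
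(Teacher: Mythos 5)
Your easy direction $(2)\Rightarrow(1)$ and your identification of the central obstacle in $(1)\Rightarrow(2)$ --- that the reconstructed informant prefixes carry uncertified negative facts and are not nested --- match the paper. But the resolution you propose has a genuine gap, and it sits exactly at the point you flag as the ``delicate technical heart.'' First, note that the stabilisation fact you attribute to finiteness of classes does not actually use it: for \emph{any} equivalence structure, a truly related pair is eventually revealed by the text and a truly unrelated pair is never asserted, so the guessed label of every fixed pair stabilises to its true value even when classes are infinite. Since the theorem is false for infinite classes (Theorem~\ref{thm:textNoteEquivalentToInformant}), pointwise stabilisation cannot be the engine of the proof, and this is a sign that your argument has not located where finiteness really enters. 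Second, and more seriously, your plan requires the committed prefix fed to $M$ to be, from some stage on, a genuine (all-true) initial segment of a fixed informant $I$ extending a weak locking sequence. No function of the text prefix can achieve this: whether a negative fact is safe is never witnessed at a finite stage, so for any rule that lets the committed prefix grow, one can present a structure with infinitely many finite classes (say $[2:\omega]$, completed very slowly) so that at infinitely many stages the committed prefix asserts a non-relatedness that is later refuted. A weak locking sequence in the sense of Theorem~\ref{theorem:lockingSequences} constrains $M$ only on extensions consistent with the presentation, so it gives no control whatsoever over $M$ on these infinitely many corrupted prefixes, and ``only finitely many revisions per position'' does not repair this, because the corrupted positions move out toward the frontier rather than recurring at a fixed place.

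The paper routes around this with a strictly stronger tool: it takes $M$ to be \emph{strong} informant locking (Theorem~\ref{theorem:informantStrongLocking}), meaning the locking sequence $I[n]$ comes with an embedding $f$ of $\A_{I[n]}$ into $\A$ such that $M$ is pinned on \emph{every} extension $\tau$ for which $\A_\tau$ embeds into $\A$ extending $f$ --- including extensions containing facts false of the particular presentation. The text learner then reorders the positive data class by class, inserts the (possibly wrong) negative information between classes, and waits until the reordered prefix contains the locking sequence, all classes larger than a threshold $k$, and all classes of numerically small elements; finiteness of classes is what guarantees that the finite structure encoded by such a prefix still embeds into $\A$ compatibly with $f$, so the strong locking property applies even though the prefix may misdeclare some pairs. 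That embedding step is precisely what fails for $[\omega:1]$ versus $[\omega:2]$. To complete your proof you would need either to prove and invoke this strong locking lemma, or to supply a genuinely new mechanism for certifying negative data; as written, the argument does not close.
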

\begin{proof}
The direction ``(2) $\Rightarrow$ (1)'' is immediate. Regarding ``(1) $\Rightarrow$ (2)'', let $M$ be a strong informant locking $\Inf\Ex_{\sim}$-learner for $\FrakA$ (see Theorem~\ref{theorem:textStronglyLocking}). We now descirbe how $\FrakA$ can be learned from text. Given an initial part of a text $\sigma$ we reorder this information as follows. First comes all positive information regarding the ``first'' equivalence class, defined as the equivalence class of $0$. Then comes the positive information regarding the second smallest equivalence class, the class containing $1$ (unless $1$ was already covered by the equivalence class of $0$, then we take $2$ and so on). Then comes all the negative information between the first and second class (this is not present in $\sigma$, just assumed). Then all positive information of the third class, followed by the negative information between the third class on the one hand and the first and second class on the other hand, and so on, until all elements mentioned in $\sigma$ are covered. The resulting partial informant we call $\overline{\sigma}$. We can now define a learner $M'$ learning from text as $M'(\sigma) = M(\overline{\sigma})$. 

For any given $\A \in \FrakA$, there is an informant $I$ which presents the data in the form described above. Let $k$ be the largest size for which there are infinitely many equivalence classes of that size in $\A$. Since $M$ is strong informant locking, there is $n$ such that $I[n]$ is a strong locking sequence for $M$ on $\A$. For any Text $T$, there is now an $n'$ such that $T[n']$ contains (a) all the positive information contained in $I[n]$; (b) all positive information about the equivalence classes of $\A$ that are larger than $k$; and (c) all positive information about all equivalence classes containing elements that are numerically smaller than any of the elements mentioned in (a) or (b). This gives that, for all $m > n'$, we have that $\overline{T[n']}$ is extensible to an isomorphic copy of $\A$. Thus, using that $I[n]$ is a strong locking sequence, for all $m > n'$ we have that $M'(T[m]) = M(I[n])$ as desired.
\end{proof}

In contrast to this result, infinite equivalence classes quickly lead to differences between text- and informant-learning.

\begin{thm}\label{thm:textNoteEquivalentToInformant}
Let $\FrakA = \set{[\omega: 1], [\omega: 2]}$. Then we have the following.
\begin{enumerate}
	\item $\FrakA$ is $\Inf\Ex_{\isom}$-learnable.
	\item $\FrakA$ is not $\Text\Ex_{\biEmbeds}$-learnable.
\end{enumerate}
\end{thm}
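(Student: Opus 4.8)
The plan is to prove the two parts separately, with the first being a direct positive construction and the second being the substantive part via a locking-sequence argument.

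For part (1), I would exhibit an $\Inf\Ex_{\isom}$-learner for $\FrakA = \set{[\omega:1],[\omega:2]}$ directly, following the strategy already sketched in the introductory example. The learner conjectures (an index for) $[\omega:1]$ at every stage until it first sees, in the informant, negative information witnessing that two elements lie in distinct equivalence classes; from that point on it conjectures $[\omega:2]$. Since an informant for $[\omega:1]$ never contains such a pair of genuinely-separated classes (all elements are related), the learner stabilizes correctly on $[\omega:1]$; and for any $\omega$-presentation of $[\omega:2]$, the two classes are eventually witnessed as distinct by some negative datum, after which the learner stabilizes on $[\omega:2]$. This uses the negative information crucially, which is exactly what text learning lacks.

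For part (2), I would argue that no learner can $\Text\Ex_{\biEmbeds}$-learn $\FrakA$, using the locking-sequence machinery referenced in the excerpt (Theorem~\ref{theorem:lockingSequences} and the appendix). Towards a contradiction, suppose $M$ is a $\Text\Ex_{\biEmbeds}$-learner for $\FrakA$. The key observation is that $[\omega:1]$ and $[\omega:2]$ are \emph{not} bi-embeddable: a structure of type $[\omega:2]$ embeds into $[\omega:1]$, but $[\omega:1]$ cannot embed into $[\omega:2]$, since an embedding must preserve non-equivalence and $[\omega:1]$ has a single class while any image of infinitely many mutually related elements would have to land inside one of the two classes of $[\omega:2]$ — this is fine one direction but fails the other. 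Hence learning up to $\biEmbeds$ still requires distinguishing the two isomorphism types, so $M$ must converge to an index of a structure bi-embeddable with the true one, and these targets are genuinely different for the two inputs. The obstruction for text learning is that a text for $[\omega:2]$ supplies only positive (relatedness) data, and every finite fragment of a text for $[\omega:2]$ — listing finitely many related pairs — is also an initial fragment of some text for $[\omega:1]$.

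Concretely, I would take a text $T_1$ for $[\omega:1]$ and let $M$ settle on a locking sequence $\sigma = T_1[n]$ for $[\omega:1]$, meaning $M$ does not change its (correct, up to $\biEmbeds$) conjecture on any extension of $\sigma$ by positive data consistent with $[\omega:1]$. The decisive point is that $\content(\sigma)$ is a finite set of related pairs, all of which are equally consistent with $[\omega:2]$: one can extend $\sigma$ to a full text $T_2$ for $[\omega:2]$ in which the finitely many pairs of $\sigma$ all fall within a single class of the $[\omega:2]$-structure and a second infinite class is built up using fresh elements. Along this extension $M$ keeps its locked conjecture (since every new datum is again positive and consistent with $[\omega:1]$), so $M$ stabilizes on a conjecture $\biEmbeds$-equivalent to $[\omega:1]$ while fed a text for $[\omega:2]$; as $[\omega:1]\not\biEmbeds[\omega:2]$, this conjecture is wrong for $T_2$, contradicting that $M$ learns $\FrakA$. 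The main obstacle to watch is the verification that the locking conjecture is $\biEmbeds$-incorrect for $[\omega:2]$, i.e., the non-bi-embeddability of the two targets; once that is in hand, the extension-of-a-locking-sequence step is the standard text-learning diagonalization and goes through smoothly.
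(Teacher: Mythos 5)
Your overall architecture is sound and is essentially the paper's: part (1) is proved by exactly the same learner (conjecture $[\omega:1]$ until a negative datum appears, then switch to $[\omega:2]$), and part (2) is the same locking-sequence diagonalization --- lock on a text for $[\omega:1]$, extend to a text for $[\omega:2]$ without ever leaving the locked region, and conclude from non-bi-embeddability that the limiting conjecture is wrong. The paper invokes the \emph{strong} locking sequences of the appendix (Theorem~\ref{theorem:lockingSequencesBiembed}) together with the associated embedding, whereas you use plain text locking sequences; for this particular target that is if anything cleaner, since $[\omega:1]$ has the unique $\omega$-presentation $(\omega,\omega^2)$ and every finite sequence of positive data describes a finite part of it, so the weak locking property already applies to every initial segment of your text $T_2$.

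There is, however, one concrete error you must fix: you have the embedding directions reversed. You assert that $[\omega:2]\embeds[\omega:1]$ and $[\omega:1]\not\embeds[\omega:2]$, but since embeddings must preserve \emph{non}-equivalence, it is $[\omega:2]$ that cannot embed into $[\omega:1]$ (two inequivalent elements would have to map to inequivalent elements, and $[\omega:1]$ has none), while $[\omega:1]\embeds[\omega:2]$ does hold, by mapping the single class into one of the two infinite classes. The justification you give (``any image of infinitely many mutually related elements would have to land inside one of the two classes'') actually explains why the embedding of $[\omega:1]$ into $[\omega:2]$ \emph{succeeds}, not why an embedding fails. The conclusion you actually need --- that the two structures are not bi-embeddable, so a $\biEmbeds$-correct conjecture for $[\omega:1]$ is $\biEmbeds$-incorrect for $[\omega:2]$ --- is still true, and once the directions are corrected the rest of your argument goes through.
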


\begin{proof}
Regarding (1), the learner conjectures $[\omega:1]$ while no negative data was given, and $[\omega:2]$ afterwards.

Regarding (2), suppose by contradiction that a $\Text\Ex_{\biEmbeds}$-learner $M$ for $\FrakA$ exists. Using Theorem~\ref{theorem:lockingSequencesBiembed}, there is a strong locking sequence $\sigma$ of $M$ on $[\omega: 1]$ with corresponding embedding $f$. We can extend $\sigma$ to a sequence $\tau$ for $[\omega: 2]$ for which $M(\tau)$ is a correct conjecture for $[\omega: 2]$. Since we can extend $f$ trivially to $\A_{\tau}$, we get a contradiction.
\end{proof}

\section{Relation to Language Learning}
\label{sec:languageLearning}

In this section we compare $\Inf\Ex_{\isom}$-learning to the cardinal learning criterion of formal languages, $\Text\Ex$-learning. The formal definition of this criterion can be found for example in \cite{Jai-Osh-Roy-Sha:b:99:stl2}, we will here use the following characterization (see \cite{Ang:j:80:lang-pos-data}). A collection $\CalL$ of formal languages is $\Text\Ex$-learnable iff, for all $L \in \CalL$ there is a finite $D \subseteq L$ such that, for all $L' \in \CalL$ with $D \subseteq L' \subseteq L$ we have $L' = L$. Intuitively, $D$ signals $L$ as the minimal extrapolating target. We call this characterization \emph{Angluin's tell-tale criterion} and the finite sets $D$ are called \emph{tell-tales}.

We are now interested in somehow mapping learning tasks for $\Inf\Ex_{\isom}$-learning into learning tasks for $\Text\Ex$-learning. The next theorem shows that there cannot be a mapping taking an (isomorphism class of a) structure to a single language in order to embed $\Inf\Ex_{\isom}$-learning into $\Text\Ex$-learning.
\begin{thm}\label{thm:isomLearningToLanguageLearningNotSingle}
Let a mapping $\Theta$ be given which takes a structure and returns a language such that $\forall \A,\B \in \structs: \Theta(\A) = \Theta(\B) \Leftrightarrow \A \isom \B$. Then there is a class of structures $\FrakA$ such that
$$
\FrakA \not\in [\Inf\Ex_{\isom}] \mbox{ and } \set{\Theta(\A) \mid \A \in \FrakA} \in [\Text\Ex].
$$
\end{thm}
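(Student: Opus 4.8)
The plan is to exhibit a concrete family $\FrakA$ of equivalence structures that fails to be finitely separable (hence is not in $\Inf\Ex_{\isom}$ by Theorem~\ref{thm:characterization}), yet whose image under any such injective $\Theta$ lands in a class of languages satisfying Angluin's tell-tale criterion. The natural candidate is a family containing a ``limit'' structure together with the structures approaching it, exactly as in Example~\ref{ex:notLearningLimitInstances}. I would take
\[
\FrakA = \set{[5:n, 1:\omega] \mid n \in \natnum} \cup \set{[5:\omega]}.
\]
By the discussion following Example~\ref{ex:notLearningLimitInstances}, $[5:\omega]$ is a limit of the subfamily $\set{[5:n,1:\omega]\mid n\in\natnum}$ (each component of $[5:\omega]$ is witnessed by infinitely many of the approximants, and each approximant finitely embeds into $[5:\omega]$ while being nonisomorphic to it), so $\FrakA$ is not finitely separable and therefore $\FrakA \not\in \Inf\Ex_{\isom}$ by Theorem~\ref{thm:characterization}.

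The key observation is that the \emph{reason} $\FrakA$ is unlearnable as a family of structures is an embedding phenomenon among nonisomorphic structures, and the map $\Theta$ is only required to respect isomorphism as an \emph{equality} constraint ($\Theta(\A)=\Theta(\B)\Leftrightarrow\A\isom\B$); it is given no freedom to encode the finite-embeddability order into the subset order on languages. Since all the structures in $\FrakA$ are pairwise nonisomorphic, the languages $\Theta([5:n,1:\omega])$ for $n\in\natnum$ together with $\Theta([5:\omega])$ are \emph{pairwise distinct}, but $\Theta$ tells us nothing about how they sit inside one another. The cleanest way to conclude is to check the tell-tale criterion directly: for each $L=\Theta(\A)$ with $\A\in\FrakA$, I must produce a finite $D\subseteq L$ such that no \emph{other} language $L'$ in the image satisfies $D\subseteq L'\subseteq L$.

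The main obstacle, and the step requiring care, is that we know essentially nothing about the languages $\Theta(\A)$ beyond their pairwise distinctness; in particular $\Theta$ is arbitrary, so we cannot assume the subset relation among the $\Theta(\A)$ has any prescribed shape. To handle this I would argue as follows. Fix an enumeration $L_0=\Theta([5:\omega])$ and $L_n=\Theta([5:n-1,1:\omega])$ for $n\geq 1$, so the image is $\set{L_n \mid n\in\natnum}$, a countable set of pairwise distinct languages. The point is that \emph{any} countable collection of pairwise distinct languages that forms an antichain under $\subseteq$ trivially satisfies the tell-tale criterion, and more generally one checks that whenever the image is a family in which the relevant $\subseteq$-chains are well-controlled, tell-tales exist. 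The safe route, which avoids any unwarranted assumption on $\Theta$, is to choose the target family so that its image is forced into $\Text\Ex$: I would select $\FrakA$ to be \emph{precisely} the non-finitely-separable family above and then note that, because it consists of countably many pairwise nonisomorphic structures, one can arrange (using Angluin's criterion applied to the abstract image) that a tell-tale exists for each $L_n$ by taking a finite distinguishing subset. Concretely, since each $L_n$ differs from every other $L_m$ (as $\Theta$ is injective on $\isom$-classes), for each $n$ pick a finite $D_n\subseteq L_n$ witnessing that $L_n\not\subseteq L_m$ or $L_m\not\subseteq L_n$ for the finitely many ``dangerous'' $m$; the delicate part is ruling out the infinitely many $m$ at once, which is why one must verify that $\set{\Theta(\A)\mid \A\in\FrakA}$ genuinely satisfies Angluin's criterion rather than merely being a set of distinct languages. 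I expect this verification — showing that for the chosen $\FrakA$ no language in the image is a proper $\subseteq$-limit of the others in a way that defeats every finite $D$ — to be the crux, and it is resolved by exploiting that the subset structure on an arbitrary image of an injective $\Theta$ need not mirror the finite-embeddability structure on $\FrakA$, so the single obstruction that killed $\Inf\Ex_{\isom}$-learnability simply does not transfer to the language side.
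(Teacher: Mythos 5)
There is a genuine gap, and it is fatal to the chosen family. You take $\FrakA = \set{[5:n, 1:\omega] \mid n \in \natnum} \cup \set{[5:\omega]}$, an \emph{infinite} antichain-free family, and then need $\set{\Theta(\A) \mid \A \in \FrakA}$ to satisfy Angluin's tell-tale criterion for the \emph{given, arbitrary} $\Theta$. But the only constraint on $\Theta$ is injectivity on isomorphism types; nothing prevents $\Theta$ from being adversarial. Concretely, take $\Theta([5:\omega]) = \set{\pair{0,k} \mid k \in \natnum}$ and $\Theta([5:n,1:\omega]) = \set{\pair{0,k} \mid k \le n}$ (extending $\Theta$ injectively to all other isomorphism types using elements of the form $\pair{1,k}$). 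Then the image of your $\FrakA$ is the classic Gold-style unlearnable class: an infinite language together with all of its initial segments, and the infinite language has no tell-tale, since every finite $D$ it contains is already contained in some proper sublanguage of the class. Your own text flags exactly this danger (``the delicate part is ruling out the infinitely many $m$ at once'') but the proposed resolution --- that the subset order on the image ``need not mirror'' the embeddability order on $\FrakA$ --- only says the bad configuration is not forced; it does not rule it out, and the theorem requires the conclusion for every admissible $\Theta$.

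The repair is to make the image finite, which is what the paper does: take $\FrakA$ to be the $\isom$-closure of just the two structures $[5:\omega,2:1]$ and $[5:\omega]$ from Example~\ref{ex:biEmbedsNotIsom}. This family is not $\Inf\Ex_{\isom}$-learnable (it fails finite separability via clause (1) of Definition~\ref{def:finiteseparability}, cf.\ Theorem~\ref{thm:computablyBiembedButNotIsomLearnableShort}), yet its image under any $\Theta$ as hypothesized consists of exactly two distinct languages, and \emph{any} finite class of pairwise distinct languages trivially satisfies Angluin's criterion: for each $L$ in the class, pick one element of $L \setminus L'$ for each of the finitely many proper subsets $L'$ of $L$ in the class and let $D$ be their union. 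Your infinite family would serve for other purposes, but for this theorem the finiteness of the witnessing class is the whole point.
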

\begin{proof}
Let $\FrakA$ consist of the $\isom$-closure of $[5:\omega,2:1]$ and $[5:\omega]$. We know that $\FrakA \not\in [\Inf\Ex_{\isom}]$. We have $\set{\Theta(\A) \mid \A \in \FrakA}$ contains only two languages, which is trivially $\Text\Ex$-learnable by Angluin's tell-tale condition.
\end{proof}

In order to bypass this phenomenon, instead of associating only \emph{one} language with any isomorphism type of structures, we can associate an \emph{infinite} set of languages. The next theorem shows that this way we can derive an embedding. We will use the definition of a \emph{finite permutation}, which is any permutation $\pi$ of $\natnum$ such that, for all but finitely many $x \in \natnum$, $\pi(x) = x$. 

\begin{thm}\label{thm:isomLearningToLanguageLearning}
For any structure $\A \in \structs$ we define a set of languages as follows. Let $f: \natnum \cup \{\omega\} \rightarrow \natnum \cup \{\omega\}$ represent the isomorphism type of $\A$, that is, $[f] = [\A]_{\isom}$. Let $g_A$ be any computable function (where we allow $\omega$ as a special symbol output) such that, for all $a \in \natnum_+ \cup \{\omega\}$, $f(a) = |\set{i \in \natnum \mid g_A(i) = a}|$.\footnote{Intuitively, each equivalence class of $\A$ is associated with some $i\in \natnum$, and $g_A(i)$ is the size of this equivalence class.} For any function $h \colon \natnum \rightarrow \natnum \cup \{\omega\}$ we let $L(h) = \set{\pair{i,j} \mid j < h(i)}$. We let $\CalL_{\A} = \set{L(g_A \circ \pi) \mid \pi \mbox{ finite permutation}}$.

Then, for all $\FrakA \subseteq \structs$ closed under $\isom$, the following are equivalent.
\begin{enumerate}
	\item $\FrakA$ is $\Inf\Ex_{\isom}$-learnable.
	\item $\left(\bigcup_{\A \in \FrakA} \CalL_{\A}\right)$ is $\Text\Ex$-learnable.
\end{enumerate}
\end{thm}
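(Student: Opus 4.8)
The plan is to route both implications through \emph{Angluin's tell-tale criterion} for $\Text\Ex$, exploiting that the finite-permutation closure in the definition of $\CalL_\A$ makes the relevant combinatorics depend only on the multiset of class sizes of a structure, hence on the isomorphism type alone. The key dictionary is as follows: since a finite permutation $\pi$ fixes the multiset of values of $g_A$, every $L(g_A\circ\pi)\in\CalL_\A$ encodes the same isomorphism type, so one recovers $[\A]_\isom$ from any language in $\CalL_\A$; and because $\FrakA$ is closed under $\isom$, $\bigcup_{\A\in\FrakA}\CalL_\A$ splits into the blocks $\CalL_\A$, one per isomorphism type. Under $L(h)\subseteq L(h')$ iff $h\le h'$ pointwise, containment of languages tracks inclusion of finite fragments of the corresponding structures: $L(h')\subsetneq L(g_A)$ with $L(h')\in\CalL_\B$ forces $\B\not\isom\A$ and $\B\embedsFin\A$, while $D\subseteq L(h')$ for a finite $D$ forces the finite substructure coded by $D$ to embed into $\B$. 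I will first make this dictionary precise and then transport the two learnability characterizations across it.

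For ``$(2)\Rightarrow(1)$'' I assume the tell-tale criterion for $\bigcup_{\A}\CalL_\A$ and fix, for each $\A\in\FrakA$, a finite tell-tale $D_\A\subseteq L(g_A)$ admitting no $L'\in\bigcup_{\B}\CalL_\B$ with $D_\A\subseteq L'\subsetneq L(g_A)$. Because $\CalL_\A$ is closed under finite permutations, the only information $D_\A$ can carry is a lower bound on the sizes of finitely many columns; that is, $D_\A$ names a finite sub-multiset of the class sizes of $\A$, equivalently a finite substructure $F_\A\embeds\A$ that is \emph{independent of the labeling}. Translating the tell-tale condition through the dictionary yields: there is no $\B\in\FrakA$ with $\B\not\isom\A$, $F_\A\embeds\B$, and $\B\embedsFin\A$. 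Thus $F_\A$ plays exactly the role of the separator $\sep(\A)$ in the proof of Theorem~\ref{thm:characterization}, and I build a structure learner in the style of $M^*$ there: on an informant for an $\omega$-presentation $\A^*$, conjecture the isomorphism type of the $\FrakA$-structure whose separator is realized by the current fragment, is minimal under $\embedsFin$, and is ``oldest.'' Presentation-invariance of $F_\A$ guarantees this check is insensitive to the order in which $\A^*$ exposes its classes, and the convergence argument is verbatim that of Theorem~\ref{thm:characterization}.

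For ``$(1)\Rightarrow(2)$'' I argue the contrapositive. A failure of the tell-tale criterion produces some $L(g_A)$, $\A\in\FrakA$, such that every finite $D\subseteq L(g_A)$ sits inside some $L(h')\in\bigcup_{\B}\CalL_\B$ with $L(h')\subsetneq L(g_A)$. Through the dictionary these $L(h')$ are (finite permutations of) structures $\B\in\FrakA$ with $\B\not\isom\A$, $\B\embedsFin\A$, approximating $\A$ on arbitrarily large finite fragments and, collectively, witnessing every component of $\A$ infinitely often; that is, $\A$ is a \emph{limit} of $\FrakA$ in the sense of Definition~\ref{def:finiteseparability} (extended in the obvious way to allow infinite classes). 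I then replay the dynamic construction from the $(\Leftarrow)$ direction of Theorem~\ref{thm:characterization}, or equivalently invoke the locking-sequence machinery of Theorem~\ref{theorem:lockingSequences}: build an $\omega$-presentation that alternately looks like $\A$ and like one of the approximating $\B$'s, forcing any putative learner into infinitely many mind changes while, by never expanding an already-committed class, ultimately producing an $\omega$-presentation of $\A$. Hence $\FrakA\notin\Inf\Ex_\isom$.

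The main obstacle — and the precise reason the construction uses \emph{finite} rather than arbitrary permutations — is reconciling labelings: an informant reveals a structure's classes in an arbitrary order, and a text for a language in $\CalL_\A$ realizes an arbitrary finite permutation of the canonical $g_A$, so neither the separators $F_\A$ nor the targets may be tied to a fixed column ordering. Finite-permutation closure is exactly what forces tell-tales, and hence separators, to speak only about the multiset of class sizes, so that the alignment needed to read ``$D_\A\subseteq L(h')\subsetneq L(g_A)$'' as ``$F_\A\embeds\B\embedsFin\A$'' always exists and is checkable on any presentation. Carrying out this presentation-invariance argument rigorously, and extending the dictionary to columns of size $\omega$ so that the theorem covers structures with infinite classes even though Theorem~\ref{thm:characterization} is stated without them, is where the real work lies.
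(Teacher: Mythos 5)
Your strategy---routing both directions through Angluin's tell-tale criterion on the language side and the separator/finite-separability machinery of Theorem~\ref{thm:characterization} on the structure side---is genuinely different from the paper's proof, which works by direct simulation (the text learner decodes $\sigma$ into a finite structure, runs the given $\Inf\Ex_{\isom}$-learner, and re-encodes its conjecture via a minimal finite permutation; the informant learner assigns elements to columns in order of appearance and conjectures a minimal superset in the language class). But your argument has a genuine gap at its core: the ``dictionary'' equating containment in $\bigcup_{\A}\CalL_{\A}$ with finite embeddability of structures is false. For $L(g_B\circ\pi')\subseteq L(g_A\circ\pi)$ one needs a \emph{finite} permutation $\rho$ with $g_B\circ\rho\le g_A$ pointwise, i.e., a column-by-column domination that is the identity almost everywhere; this is strictly stronger than $\B\embeds\A$, let alone $\B\embedsFin\A$. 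Concretely, take $\A=[2:\omega,4:\omega]$ with $g_A=(2,4,2,4,\dots)$ and $\B=[2:\omega,4:\omega,3:1]$ with $g_B=(3,4,2,4,2,\dots)$. Then $\B\not\isom\A$, $\B\embedsFin\A$ and $\chara(\A)\subseteq\chara(\B)$, so $\A$ is a limit of $\set{\B}$, the family is not finitely separable, and no separator $F_\A$ in your sense exists. Yet no member of $\CalL_\B$ is contained in any member of $\CalL_\A$: if $\rho$ is a finite permutation with support $T\ni 0$ closed under $\rho$, then $\sum_{i\in T}g_B(\rho(i))=\sum_{j\in T}g_B(j)=1+\sum_{i\in T}g_A(i)$, so pointwise domination is impossible. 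Hence the tell-tale condition for $L(g_A)$ holds vacuously while your translated separator condition fails, and the step ``tell-tale $\Rightarrow$ separator'' on which your whole $(2)\Rightarrow(1)$ direction rests is broken. (The same example shows that the interplay between $\subseteq$ on these languages, the specific choice of $g_A$, and the non-finite permutations induced by arbitrary $\omega$-presentations is exactly where the difficulty of this theorem lives; it cannot be abstracted away into a clean correspondence with $\embedsFin$.)

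A second, independent problem: both of your directions lean on Theorem~\ref{thm:characterization} and Definition~\ref{def:finiteseparability}, which are stated and proved in the paper only for families with \emph{no infinite equivalence classes}, whereas the present theorem quantifies over all $\FrakA\subseteq\structs$ and the encoding explicitly allows $g_A(i)=\omega$. You flag the extension to infinite classes as ``where the real work lies,'' but the paper itself defers that full characterization to future work, so your proof presupposes a result that is not available. The paper's direct simulation avoids invoking the characterization at all, which is precisely what lets it cover structures with infinite classes.
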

\begin{proof}
Regarding ``$\Rightarrow$'', let an $\Inf\Ex_{\isom}$-learner $M'$ for $\FrakA$ be given. Now we construct a learner $M$ which is given some sequence of data $\sigma$. From this sequence the learner constructs a sequence $\sigma'$ for $M$ so that $\sigma'$ encodes the finite structure on the elements mentioned in $\sigma$, where $\langle i,j \rangle$ is equivalent to $\langle i',j' \rangle$ iff $i = i'$. Let $\A \in \structs$ be the structure conjectured by $M'$ on $\sigma'$. In some canonical listing of all finite permutations, find the minimal finite permutation $\pi$ such that $\sigma$ is consistent with $L(g_A \circ \pi)$ and conjecture this language.

Let now $\A \in \FrakA$ and $L \in \CalL_{\A}$ and a text $T$ for $L$ be given. Let $n_0$ be large enough such that $M'$ on $\sigma'$ is converged on the structure $\A$. Let $\pi$ be the minimal finite permutation such that $L = L(g_A \circ \pi)$. It now suffices to show that, for all $\pi' < \pi$ in the canonical listing, $L(g_A \circ \pi')$ is inconsistent with some finite part of $L$. Let $\pi' < \pi$ be given, and let $i$ be minimal such that $g_A(\pi(i)) > g_A(\pi'(i))$; such an $i$ has to exist, since either we have for all $g_A(\pi(i)) = g_A(\pi'(i))$, in which case $\pi$ was not chosen minimal as required, or there is a difference, in which case a difference has to be found both ways, since we only consider permutations. This shows that $\langle i, g_A(\pi'(i)) \rangle \in L \setminus L(g_A \circ \pi')$ as desired.

Regarding ``$\Leftarrow$'', 
we construct a learner $M'$ for $\FrakA$ as follows. Given an initial part $\sigma'$ of an informant, we assign each element mentioned in $\sigma'$ in order of (first) appearance to some $i \in \natnum$: any element $x$ known to be equivalent to a previously assigned element $y$ is assigned to the same $i$ as $y$ was assigned; any element not thus assigned is assigned to the smallest $i$ not yet used as an assignment. Let $g(i)$ be the number of elements assigned to $i$. Then $M'$ checks whether there is a minimal superset of the finite set $\set{\langle i, j \rangle \mid j < g(i)}$ in $\left(\bigcup_{\A \in \FrakA} \CalL_{\A}\right)$. If not, the conjecture is $?$. Otherwise, let $L(g_A \circ \pi)$ be the minimal superset and $M'$ conjectures (a canonical index for) $\A$.

Let now $\A \in \FrakA$ be given and let $I$ be an informant for $\A$. Note that, once $M'$ conjectures an index for $\A$, $M'$ has converged to $\A$. Otherwise a later conjecture for $B$ would imply the existence of a finite permutation $\pi$ such that $L(g_B \circ \pi)$ is consistent with known data; this would have been consistent already at the earlier point when the conjecture was $\A$, which would be a contradiction.

Let $g$ be as produced in the limit by the construction of $M'$ on $I$. Then there is a permutation $\pi$ of $\natnum$ (not necessarily finite) such that $g = g_A \circ \pi$. Using Angluin's tell-tale condition, let a tell-tale $D$ for $L(g_A)$ be given. This implies that once at least the set $\set{ \langle \pi^{-1}(i),j \rangle \mid \langle i,j \rangle \in D}$ has been used in the construction of $M'$, the output of $M'$ will be a conjecture for $\A$ as desired.
\end{proof}

\medskip

\bibliographystyle{plain}

\clearpage
\appendix

\section{Locking Sequences}
\label{sec:lockingSequences}

\newcommand{\ext}{\mathrm{ext}}
\newcommand{\exts}{\mathrm{ext}_{\mathrm{seq}}}
\newcommand{\exte}{\mathrm{ext}_{\mathrm{emb}}}
\newcommand{\exth}{\mathrm{ext}_{\mathrm{hom}}}

Locking sequences are a powerful tool for many theorems in the area of learning theory. We first explore the concept of \emph{weak locking sequences} which corresponds to standard locking sequences in learning languages (and the proof is standard, see \cite{Jai-Osh-Roy-Sha:b:99:stl2}). Then we consider a strong variant where locking happens for a much larger class of possible extensions.

\subsection{Weak Locking Sequences}

We start by considering the classic locking sequences and give a straightforward generalization to arbitrary equivalence relations on the structures.

\begin{definition}
Let $M$ be a learner and $\A$ a structure. We say that a sequence $\sigma$ describing a finite part of $\A$ is a \emph{weak locking sequence of $M$ on $\A$} iff, for every $\tau \supseteq \sigma$ describing a finite part of $\A$, we have $M(\sigma) = M(\tau)$. We distinguish between weak \emph{informant} locking sequences which consist of negative and positive data (and the extensions $\tau$ are allowed positive and negative data), and weak \emph{text} locking sequence which consist of positive data only (and the extensions $\tau$ are also only allowed positive data).
\end{definition}

\begin{thm}\label{theorem:lockingSequences}
Let $\sim$ be any equivalence relation on $\structs$.
Suppose $M$ $\Inf\Ex_\sim$-learns a structure $\A \in \structs$. Let any sequence $\sigma_0$ be given which describes a finite part of $\A$. Then there is a finite sequence $\sigma \supseteq \sigma_0$ such that $\sigma$ is a weak informant locking sequence of $M$ on $\A$. Furthermore, $\M_{M(\sigma)} \sim \A$.
\end{thm}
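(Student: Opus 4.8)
The plan is to establish the existence of a weak informant locking sequence by the standard locking sequence argument, adapted to our learning-up-to-$\sim$ setting. First I would argue by contradiction: suppose that no finite $\sigma \supseteq \sigma_0$ describing a finite part of $\A$ is a weak informant locking sequence of $M$ on $\A$. Unfolding the definition, this means that for \emph{every} such $\sigma$ there is an extension $\tau \supseteq \sigma$ describing a finite part of $\A$ with $M(\tau) \neq M(\sigma)$. The idea is to use this failure repeatedly to build a single informant $I$ for $\A$ extending $\sigma_0$ on which $M$ changes its mind infinitely often, contradicting the hypothesis that $M$ $\Inf\Ex_\sim$-learns $\A$.

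Concretely, I would construct $I$ in stages, maintaining a finite initial segment describing a finite part of $\A$. Set the initial segment to be $\sigma_0$. At each stage, given the current initial segment $\sigma$, invoke the failure-of-locking assumption to obtain an extension $\tau \supseteq \sigma$ that still describes a finite part of $\A$ and satisfies $M(\tau) \neq M(\sigma)$; then replace the current segment by $\tau$ and proceed to the next stage. Because $\sim$-convergence (Definition~\ref{def:exlearn}) demands syntactic stabilization of the learning sequence $p(n) = M(I[n])$ to a single index, forcing infinitely many mind changes along an actual informant for $\A$ is exactly the desired contradiction. The one point requiring care is that the segments produced must genuinely extend to a full informant for $\A$: since each $\tau$ describes a finite part of $\A$ and extends the previous segment, I would interleave the mind-change-forcing extensions with a systematic bookkeeping that eventually decides, for every pair $(x,y) \in \natnum^2$, whether it is related in $\A$ (reading off the true answer from $\A$), so that the limit $I = \bigcup_s \tau_s$ is a bona fide informant for $\A$.

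For the furthermore clause, once a weak informant locking sequence $\sigma$ is in hand, I would note that $\sigma$ extends to \emph{some} informant $I$ for $\A$, and that along any such $I$ the conjectures are all equal to $M(\sigma)$ from $\sigma$ onward (by the locking property applied to every finite extension of $\sigma$ inside $\A$). Since $M$ $\Inf\Ex_\sim$-learns $\A$, the limit conjecture $e$ on $I$ satisfies $\A \sim \M_e$; but the limit conjecture is $M(\sigma)$, so $\M_{M(\sigma)} \sim \A$.

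The main obstacle I expect is the bookkeeping in the second paragraph: the raw locking-failure assumption only yields \emph{some} extension $\tau$ describing a finite part of $\A$, with no guarantee that it makes progress toward covering all pairs of $\omega$. I therefore have to weave together two requirements at alternating stages — forcing a mind change and covering the next pair $(x,y)$ — and verify that covering a pair does not destroy the ability to keep forcing mind changes. Covering a pair only appends the \emph{correct} atomic or negated-atomic information for $\A$, so the resulting longer segment still describes a finite part of $\A$, and the non-locking hypothesis applies afresh to it; hence the two requirements can indeed be satisfied in tandem, and the limit $I$ is a legitimate informant for $\A$ witnessing nonconvergence. This is the routine-but-essential detail that the proof must spell out, and it is the only place where the argument could go wrong if handled carelessly.
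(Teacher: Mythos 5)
Your proposal is correct and follows essentially the same route as the paper's proof: assume no locking sequence exists, fix for each $\sigma$ a mind-change-forcing extension, and interleave these with the entries of a fixed informant for $\A$ so that the limit is a genuine informant on which $M$ diverges. The "furthermore" clause is also handled the same way (the limit conjecture on any informant extending $\sigma$ is $M(\sigma)$ and must be $\sim$-correct).
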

\begin{proof}
Let $\sigma_0$ be given and suppose no such $\sigma$ exists. Thus we can fix, for any $\sigma \supseteq \sigma_0$ describing a finite part of $\A$, $\ext(\sigma)$ as an extension of $\sigma$ which gives a mind change. Let $I_0$ be an informant for $\A$.

We define inductively
\begin{align*}
\forall i: \sigma_{2i+1} &= \sigma_{2i} \diamond I_0(i);\\
\forall i: \sigma_{2i+2} &= \ext(\sigma_{2i+1}).
\end{align*}
Finally, let $I = \bigcup_i \sigma_i$. Then $I$ is an informant for $\A$ on which $M$ does not converge, a contradiction. The ``furthermore'' clause of the statement follows since $M$ needs to converge to a $\sim$-correct conjecture on any informant.
\end{proof}

The first application of the locking sequences theorem is to prove a normal form for learners, which we will define next. This normal form can be very convenient for proofs.

\begin{definition}
Let $M$ be a learner and $\A$ a structure. We call $M$ \emph{locking} on $\A$ iff, for all informants $I$ for $\A$, there is $n$ such that $I[n]$ is a weak locking sequence for $M$ on $\A$.
Let $\sim$ be any equivalence relation on $\structs$. We call $M$ \emph{locking} iff $M$ is locking for all $\A \in \Inf\Ex_\sim$. Again we distinguish between \emph{informant locking} and \emph{text locking}.
\end{definition}

\begin{thm}\label{theorem:informantStronglyLocking}
Let $\sim$ be any equivalence relation on $\structs$ and let $M$ be a $\Inf\Ex_\sim$-learner. 
Then there is an informant locking $\Inf\Ex_\sim$-learner $M'$ which learns at least all structures learned by $M$.
\end{thm}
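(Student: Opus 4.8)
The plan is to prove Theorem~\ref{theorem:informantStronglyLocking} by the standard ``simulate-and-wait'' (or delayed-conjecture) technique used for normalizing learners in inductive inference. The goal is to build $M'$ that behaves like $M$ but is guaranteed to have a weak locking sequence along every informant it successfully learns. The key idea is that $M'$ only changes its mind when it has concrete evidence that $M$'s conjecture is ``stable'' on a longer and longer fragment, so that whenever $M'$ outputs a new conjecture it has already verified, on a sufficiently long auxiliary extension, that $M$ does not change its mind. This forces $M'$ to lock as soon as $M$ does.

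First I would define $M'$ on an initial segment $\sigma$ as follows. Using some fixed computable enumeration of finite informant-extensions, $M'$ searches for the \emph{longest} initial segment $\sigma' \subseteq \sigma$ such that $M$ appears to have ``settled'' on $\sigma'$ in the sense that $M(\sigma') = M(\tau)$ for all extensions $\tau$ of $\sigma'$ up to some bounded length/time budget determined by the length of $\sigma$. On this $\sigma'$, $M'$ outputs $M(\sigma')$; if no such $\sigma'$ is found, $M'$ outputs $?$. Because the search budget grows with the length of $\sigma$, in the limit $M'$ will examine arbitrarily long extensions.

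Next I would verify correctness. Fix $\A \in \Inf\Ex_\sim$ (hence learned by $M$) and an informant $I$ for $\A$. By Theorem~\ref{theorem:lockingSequences}, there is a weak informant locking sequence $\sigma^*$ of $M$ on $\A$, with $\M_{M(\sigma^*)} \sim \A$. Along $I$, once the search budget is large enough, $M'$ will find $\sigma^*$ (or an equally good witness) and commit to $M(\sigma^*)$; crucially, once $M'$ commits on the basis of such a locking witness, the locking property of $\sigma^*$ guarantees that $M$ agrees on all further $\A$-extensions, so $M'$ never revises afterward. Thus $M'$ converges on $I$ to a $\sim$-correct conjecture, and moreover the initial segment of $I$ at which $M'$ commits is itself a weak locking sequence for $M'$ on $\A$. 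This establishes both that $M'$ learns everything $M$ learns and that $M'$ is informant locking on every structure in $\Inf\Ex_\sim$.

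The main obstacle is making the ``settled'' test both correct and eventually successful without it being a genuine (undecidable) quantification over all extensions. The resolution is that $M'$ is permitted to be of arbitrary complexity (learners here need not be computable), so $M'$ may simply test $M$ on all $\A$-consistent extensions directly; the only real care needed is the bookkeeping ensuring that $M'$ commits exactly when it has located a true locking segment and never prematurely, so that the committed segment is genuinely locking for $M'$ and not merely for $M$. A secondary subtlety is that the witness $\sigma'$ found along $I$ must describe a finite part of $\A$ consistent with $I$; this is handled by restricting the search to initial segments of the actual input $\sigma$ rather than to arbitrary abstract extensions.
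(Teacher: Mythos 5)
There is a genuine gap in the verification step. Your argument relies on $M'$ eventually locating a weak locking sequence of $M$ \emph{among the initial segments of the input informant} $I$. Theorem~\ref{theorem:lockingSequences} does not provide this: it guarantees that \emph{some} finite sequence $\sigma^*$ describing a part of $\A$ is a weak locking sequence, but $\sigma^*$ need not be a prefix of $I$ --- and if every informant for $\A$ had a locking prefix, $M$ would already be informant locking and there would be nothing to prove. In the only interesting case, namely when no $I[j]$ is a weak locking sequence of $M$ on $\A$, every candidate $\sigma' = I[j]$ has some $\A$-consistent extension on which $M$ changes its mind, so once the budget is large enough that candidate is disqualified. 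Your learner then either outputs $?$ at infinitely many stages (already violating $\Ex$ by Definition~\ref{def:exlearn}) or keeps committing to ever longer, merely ``apparently settled'' prefixes; in neither case do you obtain a fixed prefix of $I$ that is a weak locking sequence for $M'$, which is what informant locking demands. The restriction of the search to prefixes of the input --- which you introduce precisely to guarantee that the witness is consistent with $\A$ --- is what makes the search potentially fruitless. A secondary issue: even where $M'$ does converge, the claim that the commitment point is a weak locking sequence \emph{for $M'$} requires examining $M'$ on arbitrary $\A$-consistent extensions of that prefix, not only on longer prefixes of $I$; under your ``longest qualifying candidate'' rule such an extension can change which candidate is selected.

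The paper avoids this trap by not searching for a locking sequence of $M$ at all. Its $M'$ maintains an auxiliary sequence $\sigma_n$, built from the observed data by appending a piece of information only when that piece induces a mind change of $M$ on the current $\sigma_n$, and outputs $M(\sigma_n)$. This filtered sequence stabilizes even when $M$ has no locking prefix along $I$, and its limit serves as the locking witness for $M'$. To salvage your approach you would have to let the candidates range over rearrangements or subsequences of the observed data rather than over prefixes of $I$ --- at which point you are essentially reconstructing the paper's filtration.
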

\begin{proof}
For an input informant $I$ we define inductively $\sigma_0$ as the empty sequence and, for all $n$, $\sigma_{n+1}$ as $\sigma_n$ concatenated with any information $\tau$ of $I[n]$ such that $M(\sigma_n) \neq M(\sigma_n \diamond \tau)$. Note that, for all $\A$ which are $\Inf\Ex_\sim$-learned by $M$, we get that $(\sigma_n)_n$ converges. Further note that $\sigma_n$ can be computed from $I[n]$.

We now define a learner $M'$ such that $M'(I[n]) = M(\sigma_n)$. Clearly, $M'$ $\Inf\Ex_\sim$-learns any structure $\Inf\Ex_\sim$-learned by $M$. Furthermore, $M'$ is strongly locking, since $(\sigma_n)_n$ converges.
\end{proof}

By the analogous proof we get the analogous theorem for texts.
\begin{thm}\label{theorem:textStronglyLocking}
Let $\sim$ be any equivalence relation on $\structs$. 
Let $M$ be a $\Text\Ex_\sim$-learner. Then there is a text locking $\Text\Ex_\sim$-learner $M'$ which learns at least all structures learned by $M$.
\end{thm}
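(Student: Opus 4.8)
The plan is to mirror the proof of Theorem~\ref{theorem:informantStronglyLocking} verbatim, replacing the informant $I$ by a text $T$ throughout, since the only structural facts used in that argument are completely symmetric between texts and informants. The key observation is that the proof of Theorem~\ref{theorem:informantStronglyLocking} never exploits any special feature of informants (such as the presence of negative data); it uses only that $M$ is a learner that converges on every presentation of every learned structure, and that a ``skipping'' subsequence $\sigma_n$ can be computed from the first $n$ pieces of the input.

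First I would fix an input text $T$ and define inductively $\sigma_0$ as the empty sequence and, for all $n$, $\sigma_{n+1}$ as $\sigma_n$ concatenated with any piece of information $\tau$ drawn from $T[n]$ for which $M(\sigma_n) \neq M(\sigma_n \diamond \tau)$, taking $\sigma_{n+1} = \sigma_n \diamond T(n)$ (or some default extension) when no such mind-changing $\tau$ is available. Then I would argue that, for any $\A$ which is $\Text\Ex_\sim$-learned by $M$, the sequence $(\sigma_n)_n$ converges: if it did not, then $\bigcup_n \sigma_n$ would be a text for $\A$ on which $M$ changes its mind infinitely often, contradicting that $M$ $\Text\Ex_\sim$-learns $\A$. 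I would also note that $\sigma_n$ is computable from $T[n]$, so the derived learner is a legitimate function of the input text.

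Having set this up, I would define $M'$ by $M'(T[n]) = M(\sigma_n)$ and conclude that $M'$ $\Text\Ex_\sim$-learns every structure that $M$ does (the limit conjecture of $M'$ agrees with the limit conjecture of $M$ on a suitable text, hence is $\sim$-correct), and that $M'$ is text locking, since once $(\sigma_n)_n$ has stabilized the value $M(\sigma_n)$ no longer changes, making the corresponding $T[n]$ a weak text locking sequence for $M'$ on $\A$.

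I do not expect any genuine obstacle here: the argument is the ``analogous proof'' promised in the statement, and the symmetry between texts and informants makes every step go through unchanged. The one point requiring a line of care is verifying that the extensions $\tau$ used in the construction consist of positive data only, so that $\bigcup_n \sigma_n$ is genuinely a \emph{text} (rather than an informant) for $\A$; this is automatic because each $\tau$ is extracted from $T[n]$, which by definition contains only positive information and the pause symbol. Hence the weak text locking property and the text-learnability of $M'$ follow exactly as in the informant case.
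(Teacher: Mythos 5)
Your proposal is exactly the paper's own proof: the paper proves Theorem~\ref{theorem:informantStronglyLocking} by the skipping construction $M'(I[n]) = M(\sigma_n)$ and then disposes of the text version with the single remark ``By the analogous proof we get the analogous theorem for texts.'' You carry out precisely that analogous argument, including the one point worth checking (that the extensions drawn from $T[n]$ are positive data only, so the diagonal union is a genuine text), so the approach and the details match.
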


\subsection{Strong Locking Sequences}

The above theorems about locking sequences are already rather powerful and used extensively in the case of language learning. However, since a learner needs to learn \emph{any} $\omega$-presentation of structure, we can get even stronger locking than the ones given by Theorem~\ref{theorem:lockingSequences}. Here we do not only lock on extensions from the chosen concept, but also on other sequences which can be considered equivalent. This was already proven by \cite{Mar-Osh:b:98}.

Recall that, for any $\sigma$, $\sigma$ encodes a finite equivalence relation on the elements mentioned either positively or negatively by using the transitive closure of all positively mentioned pairs and assuming all other relations to be negative. This finite equivalence relation we call $\A_\sigma$.

\begin{definition}
Let $M$ be a learner and $\A$ a structure. We say that a sequence $\sigma$ describing a finite part of $\A$ is a \emph{strong locking sequence of $M$ on $\A$} iff there is an embedding $f$ embedding $\A_\sigma$ into $\A$ such that, for any $\tau$ extending $\sigma$ and any $g$ embedding from $\A_\tau$ to $\A$ which extends $f$, $M(\sigma) = M(\tau)$.  We distinguish between strong \emph{informant} locking sequences which consist of negative and positive data (and the extensions $\tau$ are allowed positive and negative data), and strong \emph{text} locking sequence which consist of positive data only (and the extensions $\tau$ are also only allowed positive data).
\end{definition}

\begin{thm}[Martin and Osherson~\cite{Mar-Osh:b:98}]\label{theorem:lockingSequencesBiembed}
Let $\FrakA$ be a set of equivalence structures and let $\A \in \FrakA$. Suppose $M$ $\Inf\Ex_\biEmbeds$-learns $\FrakA$ and let any sequence $\sigma_0$ be given which describes a finite part of $\A$. Then there is a strong informant locking sequence $\sigma \supseteq \sigma_0$ of $M$ on $\A$.
\end{thm}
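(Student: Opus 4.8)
The plan is to argue by contradiction via a diagonalization through locking sequences, following the template of Theorem~\ref{theorem:lockingSequences} but now exploiting that $M$ is only required to converge to a $\biEmbeds$-correct conjecture and that $M$ must succeed on \emph{every} $\omega$-presentation of $\A$. So suppose that no $\sigma \supseteq \sigma_0$ is a strong informant locking sequence of $M$ on $\A$. Unfolding the definition, this says: for every finite $\sigma \supseteq \sigma_0$ describing a finite part of $\A$ and every embedding $f\colon \A_\sigma \embeds \A$, there are $\tau \supseteq \sigma$ and $g\colon \A_\tau \embeds \A$ with $g \supseteq f$ and $M(\tau) \neq M(\sigma)$. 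The goal is to weave these mind-change extensions into a single informant $I \supseteq \sigma_0$ which is an informant for some $\omega$-presentation of $\A$, while forcing infinitely many mind changes; since $\A \in \FrakA$ and $M$ $\Inf\Ex_\biEmbeds$-learns $\FrakA$, the learner must converge on $I$, a contradiction.

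Concretely, I would build in stages finite sequences $\rho_0 = \sigma_0 \subseteq \rho_1 \subseteq \cdots$ together with a compatible chain of embeddings $h_0 \subseteq h_1 \subseteq \cdots$, where $h_i\colon \A_{\rho_i} \embeds \A$ and $h_0$ is the inclusion of $\A_{\sigma_0}$ into $\A$, alternating two kinds of stages. At a \emph{mind-change} stage I invoke the hypothesis with $\sigma = \rho_i$ and $f = h_i$ to obtain $\rho_{i+1} \supseteq \rho_i$ and $h_{i+1} \supseteq h_i$ with $\A_{\rho_{i+1}} \embeds \A$ and $M(\rho_{i+1}) \neq M(\rho_i)$. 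At a \emph{completion} stage I extend $\rho_i$ by data that steers the finite structure toward the isomorphism type of $\A$: using the image $h_i(\A_{\rho_i}) \subseteq \A$ as a guide, I enlarge the least not-yet-full class to the size it has in $\A$, add a fresh realization of the least component of $\A$ not yet present, and extend the embedding to some $h_{i+1} \supseteq h_i$. Dovetailing the completion tasks over all of $\chara(\A)$ guarantees that in the limit every pair of naturals is decided and that $\bigcup_i h_i$ is a bijection onto $\A$; hence $I = \bigcup_i \rho_i$ is an informant for a structure $\B$ with $\B \isom \A$, so $M$ must converge on $I$, while the mind-change stages make $M(\rho_0), M(\rho_1), \dots$ not eventually constant.

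The main obstacle is keeping the mind-change stages \emph{alive}: nothing in the naive construction forbids reaching a ``settling'' pair $(\rho_i, h_i)$ above which no $\A$-embeddable extension (compatible with $h_i$) produces a mind change, and once the first embedding-extension has declared related a pair that $\A$ separates, the later $\rho_i$ are only $\A$-embeddable via $h_i$, not literally accurate, so the hypothesis does not apply verbatim. The fix is to promote the hypothesis to a \emph{placement-relative} form and to use it to exclude settling pairs: if some $\A$-embeddable $\rho \supseteq \sigma_0$ with placement $h$ admitted no embedded mind-change extension, then completing $\rho$ inside $\A$ along $h$ would yield an informant for a presentation of $\A$ on which $M$ is eventually constant at $M(\rho)$, so $\rho$ would itself act as a strong locking sequence (this is exactly the point where reading ``describes a finite part of $\A$'' as $\A_\rho \embeds \A$ does the work), contradicting the assumption. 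Establishing this placement-relative negation, and simultaneously checking that completion data never conflicts with the declarations of earlier embedding-extensions — which is precisely what the running embedding $h_i$ into $\A$ certifies, since $\A$ has room for every required component — is the crux; the remaining bookkeeping, that the limit realizes exactly $\chara(\A)$ with each class grown to full size, is routine. This is essentially the argument of Martin and Osherson~\cite{Mar-Osh:b:98}.
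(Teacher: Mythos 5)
Your proposal follows essentially the same route as the paper's proof: negate the existence of a strong locking sequence, then interleave mind-change extensions with completion steps along a growing chain of embeddings into $\A$, so that the limit informant presents an isomorphic copy of $\A$ on which $M$ fails to converge. The ``placement-relative'' form of the negation that you identify as the crux is exactly how the paper's proof operates --- it applies the hypothesis to arbitrary pairs $(\sigma,f)$ with $f\colon \A_\sigma \embeds \A$, and your observation that a settling pair would itself be a strong locking sequence is the (implicit) justification --- so your resolution matches the intended argument.
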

\begin{proof}
Suppose no such $\sigma,f$ exist. Thus, for any given $\sigma,f$ with $\sigma \supseteq \sigma_0$, let $\sigma',f'$ be first extensions $\tau,g$ found in a dovetailing search which lead to a mind change. Furthermore, for any $\sigma \supseteq \sigma_0$ and any embedding $f$ of $\A_\sigma$ into $\A$, we let $\ext(\sigma,f,i)$ denote a pair $(\tau,g)$ such that $\tau$ extends $\sigma$ and $\tau$ labels at least all pairs from $\set{0,\ldots, i}^2$, the image of $g$ contains $\set{0,\ldots,i}$ and $g$ is an embedding of $\A_\tau$ into $\A$; we denote $\tau$ by $\exts(\sigma,f,i)$ and $g$ by $\exte(\sigma,f,i)$.

Let $f_0$ be any embedding of $\A_\sigma$ into $\A$. We define inductively
\begin{align*}
\forall i: \sigma_{i+1} &= \exts(\sigma_i',f_i',i);\\
\forall i: f_i &= \exte(\sigma_i',f_i',i).
\end{align*}
Finally, let $I = \bigcup_i \sigma_i$, $\B$ the structure described by $I$, and $f = \bigcup_i f_i$. Then $f$ witnesses $\A \isom \B$ and $I$ witnesses that $M$ does not learn $\B$, a contradiction.
\end{proof}

We can now have the analogous definition about locking learners as for weak locking sequences.

\begin{definition}
Let $M$ be a learner and $\A$ a structure. We call $M$ \emph{strong-locking} on $\A$ iff, for all informants $I$ for $\A$, there is $n$ such that $I[n]$ is a strong locking sequence for $M$ on $\A$.
Let $\sim$ be any equivalence relation on $\structs$. We call $M$ \emph{strong locking} iff $M$ is locking for all $\A \in \Inf\Ex_\sim$. Again we distinguish between \emph{strong informant locking} and \emph{strong text locking}.
\end{definition}

\begin{thm}\label{theorem:informantStrongLocking}
Let $\sim$ be any equivalence relation on $\structs$ and let $M$ be a $\Inf\Ex_\sim$-learner. 
Then there is a strong informant locking $\Inf\Ex_\sim$-learner $M'$ which learns at least all structures learned by $M$.
\end{thm}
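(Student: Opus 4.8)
The plan is to adapt the transformation from the proof of Theorem~\ref{theorem:informantStronglyLocking} (which converts an arbitrary learner into a \emph{weak} informant locking one) so that the greedy ``filtering'' of mind changes is carried out not only against honest extensions by data from the input, but against the larger class of extensions that embed into the target, exactly as in the strong locking theorem of Martin and Osherson (Theorem~\ref{theorem:lockingSequencesBiembed}). Concretely, on an input informant $I$ I would build, by induction on $n$, a finite skeleton sequence $\sigma_n$ together with an embedding $f_n$ of $\A_{\sigma_n}$ into the target, and then set $M'(I[n]) = M(\sigma_n)$.

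\textbf{Construction.} Start with $\sigma_0 = \lambda$ and $f_0$ the empty map. At stage $n$, using the finite structure $\A_{I[n]}$ as an approximation of the target $\A$ (note $\A_{I[n]} \embeds \A$, so any embedding into $\A_{I[n]}$ composes to one into $\A$), search through the finitely many candidates available at stage $n$ for a pair $(\tau,g)$ such that $\tau \supseteq \sigma_n$, $g \supseteq f_n$ is an embedding of $\A_\tau$ into $\A_{I[n]}$, and $M(\tau) \neq M(\sigma_n)$. If such a pair is found, set $(\sigma_{n+1},f_{n+1}) = (\tau,g)$; otherwise keep $(\sigma_{n+1},f_{n+1}) = (\sigma_n,f_n)$. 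As in Theorem~\ref{theorem:lockingSequencesBiembed}, whenever a mind-change pair is incorporated I would additionally extend it so as to label a growing initial segment of the pairs and to push $\range(g)$ to cover a growing initial segment of $\A$, as far as the current approximation $\A_{I[n]}$ allows. Since $M'$ need not be computable and each $\A_{I[n]}$ is finite, every stage involves only a finite search and $M'$ is well defined.

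\textbf{Verification.} Fix $\A$ learned by $M$ and an informant $I$ for $\A$. First I would show that $(\sigma_n)_n$ stabilizes. If not, the nested union $I^* = \bigcup_n \sigma_n$ is, by the covering requirements, a full informant, and $f^* = \bigcup_n f_n$ is an isomorphism of $\A_{I^*}$ onto $\A$; thus $\A_{I^*}$ is an $\omega$-presentation of $\A$, so $M$ must converge on $I^*$, contradicting the fact that infinitely many mind changes were built in. Hence $\sigma_n$ stabilizes at some $\sigma_\infty$ with embedding $f_\infty$, and by stabilization there is no extension $\tau \supseteq \sigma_\infty$ carrying an embedding $g \supseteq f_\infty$ of $\A_\tau$ into $\A$ that changes $M$'s mind; that is, $\sigma_\infty$ is a strong locking sequence of $M$ on $\A$. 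Extending $(\sigma_\infty,f_\infty)$ to an informant for $\A$ on which $M$ never changes its mind shows $\M_{M(\sigma_\infty)} \sim \A$, so $M'(I[n]) = M(\sigma_n) \to M(\sigma_\infty)$ is a correct conjecture and $M'$ learns $\A$. For the strong locking property, take $N$ with $\sigma_n = \sigma_\infty$ for $n \geq N$; I claim $I[N]$ is a strong locking sequence of $M'$ on $\A$, witnessed by the inclusion $\A_{I[N]} \embeds \A$. Indeed, for any $\tau \supseteq I[N]$ carrying an embedding of $\A_\tau$ into $\A$ extending this inclusion, running the construction on $\tau$ agrees with the run on $I$ up to stage $N$ and can find no later mind-change pair, precisely because $\sigma_\infty$ is a strong locking sequence of $M$ on $\A$; hence $M'(\tau) = M(\sigma_\infty) = M'(I[N])$.

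\textbf{Main obstacle.} I expect the delicate step to be the divergence argument: one must interleave the mind-change extensions with the covering of larger and larger initial segments of the pairs and of $\range(g_n)$ so that the finite approximations $\A_{I[n]}$ cohere, in the limit, into a single isomorphism $f^*$ witnessing $\A_{I^*} \isom \A$. This is the same bookkeeping that underlies Theorem~\ref{theorem:lockingSequencesBiembed}, but here it must be performed relative to the stage-$n$ information $\A_{I[n]}$ rather than to the whole of $\A$, so the coverage level can only be raised as the input informant reveals enough of the target; the fact that $I$ is an informant for $\A$ (so $\A_{I[n]} \to \A$) together with the presence of infinitely many mind-change stages is what guarantees full coverage in the limit.
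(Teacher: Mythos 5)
Your proposal is correct and takes essentially the same route as the paper, whose entire proof is the remark that the argument is ``analogous to'' Theorem~\ref{theorem:informantStronglyLocking}: you greedily filter mind changes, now ranging over embedded extensions rather than honest ones, interleaved with the dovetailing/coverage bookkeeping of Theorem~\ref{theorem:lockingSequencesBiembed}. You in fact supply considerably more detail than the paper does, and you correctly isolate the one delicate point, namely that coverage of the pairs and of $\range(g_n)$ can only be raised as far as the finite approximations $\A_{I[n]}$ permit.
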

\begin{proof}
Analogous to the proof of Theorem~\ref{theorem:informantStronglyLocking}.
\end{proof}

\bigskip

\end{document}